\newtheorem{theorem}{Theorem}[section]
\newtheorem{corollary}[theorem]{Corollary}
\newtheorem{remark}[theorem]{Remark}
\newtheorem{lemma}[theorem]{Lemma}
\newtheorem{example}[theorem]{Example}
\newtheorem{definition}[theorem]{Definition}
\newtheorem{conjecture}[theorem]{Conjecture}
\numberwithin{equation}{section}
\begin{document}
\title[]{Linear stability and a stability of Lazarsfeld-Mukai bundles}
\author{Abel Castorena and H. Torres-L\'opez}

\address{Centro de Ciencias Matem\'aticas. Universidad Nacional Aut\'onoma de M\'exico, Campus Morelia.}
\email{abel@matmor.unam.mx}
\email{hugo@matmor.unam.mx}
\thanks{The first named author was supported by Research Grant PAPIIT IN100716 (UNAM).\\ 
The second named author is supported with a Posdoctoral Fellowship from CONACyT, and was partially supported by Research Grant PAPIIT IN100716 (UNAM)}


\keywords{stability, Lazarsfeld bundle, Butler conjecture}
\subjclass[2010]{14C20, 14H10, 14H51,14H60}


\date{\today}

\begin{abstract}Let $C$ be a smooth irreducible projective curve and let $(L,H^0(C,L))$ be a complete and generated linear series on $C$. Denote by $M_L$ the kernel of the evaluation map $H^0(C,L)\otimes\mathcal O_C\to L$. The exact sequence $0\to M_L\to H^0(C,L)\otimes\mathcal O_C\to L\to  0$ fits into a commutative diagram that we call the Butler's diagram. This diagram induces in a natural way a multiplication map on global sections $m_W: W^{\vee}\otimes H^0(K_C)\to H^0(S^{\vee}\otimes K_C)$, where $W\subseteq H^0(C,L)$ is a subspace and $S^{\vee}$ is the dual of a subbundle $S\subset M_L$. When the subbundle $S$ is a stable bundle, we show that the map $m_W$ is surjective. When $C$ is a Brill-Noether general curve, we use the surjectivity of $m_W$ to give another proof on the semistability of $M_L$, moreover we fill up a gap of an incomplete argument by Butler: With the surjectivity of $m_W$ we give conditions to determinate the stability of $M_L$, and such conditions implies the well known stability conditions for $M_L$ stated precisely by Butler. Finally we obtain the equivalence between the stability of $M_L$ and the linear stability of $(L,H^0(L))$ on $\gamma$-gonal curves.
 \end{abstract}

\maketitle

\section{Introduction}\label{intro}

Let $C$ be a smooth irreducible projective curve of genus $g$ over an algebraically closed field $k$ of characteristic zero, and let $K_C$ be the canonical bundle on $C$. A generated linear series of type $(d,r+1)$ over $C$ is a pair $(L,V)$, where $L$ is a generated line bundle of degree $d$ on $C$ and $V\subseteq H^0(C,L)$ is a linear subspace of dimension $r+1$ that generates $L$. The kernel $M_{V,L}$ of the evaluation map $V\otimes \mathcal{O}_C\rightarrow L$ fits into the following exact sequence	
\begin{equation}\label{dualspam}
 \xymatrix{ 0 \ar[r]^{} &  M_{V,L} \ar[r]^{}&  V\otimes \mathcal{O}_C\rightarrow  \ar[r]^{} &  L \ar[r]^{} & 0.}
\end{equation}

\noindent The bundle $M_{V,L}$ is called  Lazarsfeld-Mukai bundle. When $V=H^0(C,L)$, we will denote the bundle $M_{H^0(L),L}$ by $M_L$. The vector bundle $M_{V,L}$ and its dual $M_{V,L}^{\vee}$ have been studied from different points of view because of the rich geometry they encode. The study of the stability of $M_{V,L}$ is related with: the study of Brill-Noether varieties (see \cite{bbn1}), the Resolution Minimal Conjecture (see \cite{resolucionminimal}), the stability of the tangent bundle of a projective space restricted to a curve; and the theta divisors of vector bundles on curves (see \cite{mis}, \cite{mistretastopino}). Ein and Lazarsfeld used the stability of $M_{V,L}$ to prove the stability of the Picard bundle (see \cite{ein}). In (\cite{ram}), Paranjape and Ramanan proved that $M_{K_C}$ is semistable, and David C. Butler showed that $M_L$ is stable for $d>2g$, and it is semistable for $d=2g$ (see \cite{butler} and \cite{ram}). 
\vspace{.25cm}

\noindent David Mumford introduced the concept of linear stability for projective varieties $X\subset\mathbb{P}^n$ (see \cite{mumfordstability}). In some sense, this definition is a way to measure how $X$ sits in $\mathbb{P}^n$. It was generalized for linear series $(L,V)$ over a curve $C$ (see \cite{mistretastopino}). Linear stability of a generated linear series $(L,V)$ is a weaker condition than the stability for the vector bundle $M_{V,L}$, that is, the stability of $M_{V,L}$ implies the linear stability of the pair $(L,V)$. In this direction, E. C. Mistretta showed that the linear stability of $ (L,V)$ is equivalent to the stability of $ M_{V,L}$ when $d\geq 2g+2c$ and $V\subseteq H^0(L)$ is a subspace of codimension $c\leq g$ (see \cite{mis}, Lemma 2.2). Using this equivalence he showed that for a general subspace $V\subseteq H^0(L)$ of codimension $c\leq g$,  $M_{V,L}$ is semistable (see \cite{mis}, Theorems 2.7 and 2.8).
  
 \vspace{.25cm}
 
\noindent E. C. Mistretta and L. Stoppino gave conditions for the equivalence between the stability of $M_{V,L}$ and linear stability of a linear series  $(L,V)$ of type $(d,r+1)$, when  $d\leq 2r+\text{Cliff}(C)$ (see \cite{mistretastopino}, Theorem 1.1). With this equivalence they proved the stability of $M_{L}$ when $L$ computes the Clifford index or $d\geq 2g-\text{Cliff}(C)$. The importance in giving this equivalence is because it is easier to prove the linear stability for a pair $(L,V)$ instead of the stability of $M_{V,L}$.

 \vspace{.25cm}
 
In this paper we are interested in two goals. The first one, is to study the multiplication map of global sections for certain vector bundles appearing in a natural way when we dualize a diagram (see diagram \ref{butlerdiagram} below) induced by the choice of $(L,V)$. The second one, is to give a positive answer to a conjecture of C. Mistretta and L. Stoppino for the case of $\gamma$-gonal curves (see \cite{mistretastopino} Conjecture 8.7). In this direction, our goal is to give conditions under which the stability of $M_L$ is equivalent to the linear stability of $(L, H^0(L))$ (see Corollary \ref{equivalenciageneral}). Moreover, we give a criterium where this equivalence is satisfied on $\gamma$-gonal curves(see Theorem \ref{gonalinyectivo}, section 5). To explain in our context this multiplication map and our results we recall the following: 
\vskip2mm

\noindent Let $L$ be a generated line bundle $L$ over a curve $C$ and let $V\subseteq H^0(C,L)$ be a subspace of sections that generates $L$, consider the kernel of the evaluation map $M_{V,L}=\text{ker}(V\otimes O_C\to L)$. We recall that given a subbundle $S\subseteq M_{V,L}$, there exists a subspace $W^{\vee}\subseteq H^0(C,S^{\vee})$ and a bundle $F_S$ that fit into the following diagram:


\begin{equation}\label{butlerdiagram}
 \xymatrix{0  \ar[r]  & S  \ar[r] \ar@{^{(}->}[d]      &  W\otimes\mathcal{O}_C \ar[r] \ar@{^{(}->}[d] & F_S \ar[r] \ar@{->}[d]^{\alpha}  & 0\\
				0\ar[r]^{}	& M_{V,L} \ar[r]_{}& V\otimes\mathcal{O}_C  \ar[r]_{}   & L \ar[r]^{} & 0    .\\  }
\end{equation}

Indeed, we define $ W \hookrightarrow V$ by $W^{\vee}:=\text{Im}(V^{\vee}\stackrel{\phi}{\rightarrow} H^{0}(C,S^{\vee}))$. Note that $W^{\vee}$ generates $S^{\vee}$ because $W$ is a subspace of $V$.  Thus,
we define $F_S^{\vee}:=\text{Ker}(W^{\vee}\otimes\mathcal{O}\rightarrow  S^{\vee}).$ We call the diagram \ref{butlerdiagram} the {\bf Butler's diagram of }$(L,V)$ by $S$. When $V=H^0(C,L)$, we just call the diagram \ref{butlerdiagram} the {\bf Butler's diagram of} $L$ by $S$. 

\vskip2mm

\noindent When we dualize the first exact row in the diagram (\ref{butlerdiagram}) and we twist by $K_C$, we take cohomology to obtain the following multiplication map of sections
 \begin{equation*}
  \xymatrix{m_W: W^{\vee}\otimes H^0(C,K_C)\ar[r]^{} & H^0(C,S^{\vee}\otimes K_C).}
 \end{equation*}

\noindent  The rank of multiplication map of sections for vector bundles on curves has interesting geometric meanings and appears in many contexts in algebraic geometry. In this direction we consider the complete case $V=H^0(C,L)$ and the corresponding Butler's diagram of $L$ by $S$. We have the following results.

	\begin{theorem}\label{intromultiplicationsections}
	 Let $Q:=M_L/S$ be the quotient of $M_L$ by $S$. 
	 \begin{enumerate}
	 \item   The multiplication map 
	 \begin{equation*}
  \xymatrix{m_W: W^{\vee}\otimes H^0(C,K_C)\ar[r]^{} & H^0(C,S^{\vee}\otimes K_C),}
 \end{equation*}
	 
is surjective if and only if  $H^0(Q)=0$.

\vspace{.2cm}

\item  If $m_W$ is surjective, then $W=H^0(F_S)$.

\vspace{.2cm}

\item  If $S\subset M_L$ is stable of maximal slope, then $H^0(Q)=0$.

\vspace{.2cm}
\end{enumerate}
	\end{theorem}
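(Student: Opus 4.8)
The plan is to deduce all three parts from Serre duality applied to a handful of short exact sequences built from Butler's diagram and from $0\to S\to M_{L}\to Q\to 0$. First I would record the facts I will use repeatedly. Since $V=H^{0}(C,L)$ is complete, the evaluation sequence gives $H^{0}(M_{L})=\ker\!\big(H^{0}(L)\xrightarrow{\ \mathrm{id}\ }H^{0}(L)\big)=0$, hence $H^{0}(S)=0$ for every subbundle $S\subseteq M_{L}$. Dualizing $0\to M_{L}\to V\otimes\mathcal{O}_{C}\to L\to 0$ exhibits $M_{L}^{\vee}$ as a quotient of a trivial bundle, so $M_{L}^{\vee}$ and its quotient $S^{\vee}$ are globally generated; as a globally generated bundle of degree $0$ on $C$ is trivial while $H^{0}(S)=0$ forbids $S$ from being trivial, I obtain $\deg S<0$. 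By Serre duality $H^{1}(M_{L}^{\vee}\otimes K_{C})\cong H^{0}(M_{L})^{\vee}=0$. I will assume $S$ is saturated in $M_{L}$, so that $Q$ is locally free and $0\to Q^{\vee}\to M_{L}^{\vee}\to S^{\vee}\to 0$ is exact.

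For part (1), I would compare $m_{W}$ with the analogous map $m_{V}\colon V^{\vee}\otimes H^{0}(K_{C})\to H^{0}(M_{L}^{\vee}\otimes K_{C})$ arising from the dualized, $K_{C}$-twisted bottom row $0\to L^{\vee}\otimes K_{C}\to V^{\vee}\otimes K_{C}\to M_{L}^{\vee}\otimes K_{C}\to 0$. First, $m_{V}$ is surjective: its cokernel embeds into $\ker\!\big(H^{1}(L^{\vee}\otimes K_{C})\to H^{1}(V^{\vee}\otimes K_{C})\big)$, and by Serre duality this last arrow is dual to the identity $V=H^{0}(V\otimes\mathcal{O}_{C})\to H^{0}(L)$, hence injective. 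Second, the very definition $W^{\vee}=\operatorname{Im}\!\big(V^{\vee}\to H^{0}(S^{\vee})\big)$ makes the square with top row $m_{V}$, bottom row $m_{W}$, left arrow the surjection $V^{\vee}\otimes H^{0}(K_{C})\twoheadrightarrow W^{\vee}\otimes H^{0}(K_{C})$, and right arrow $\rho\colon H^{0}(M_{L}^{\vee}\otimes K_{C})\to H^{0}(S^{\vee}\otimes K_{C})$ commute, because the sheaf map $V^{\vee}\otimes\mathcal{O}_{C}\to S^{\vee}$ factors through $W^{\vee}\otimes\mathcal{O}_{C}$ by construction of $W^{\vee}$; since the left arrow and $m_{V}$ are surjective, $\operatorname{Im}(m_{W})=\operatorname{Im}(\rho)$. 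Finally, the long exact sequence of $0\to Q^{\vee}\otimes K_{C}\to M_{L}^{\vee}\otimes K_{C}\to S^{\vee}\otimes K_{C}\to 0$ together with $H^{1}(M_{L}^{\vee}\otimes K_{C})=0$ shows the connecting map $\delta\colon H^{0}(S^{\vee}\otimes K_{C})\to H^{1}(Q^{\vee}\otimes K_{C})$ is surjective, so $m_{W}$ surjective $\Leftrightarrow$ $\rho$ surjective $\Leftrightarrow$ $\delta=0$ $\Leftrightarrow$ $H^{1}(Q^{\vee}\otimes K_{C})=0$ $\Leftrightarrow$ $H^{0}(Q)=0$ by Serre duality.

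For part (2), I would dualize and twist the top row of Butler's diagram to $0\to F_{S}^{\vee}\otimes K_{C}\to W^{\vee}\otimes K_{C}\to S^{\vee}\otimes K_{C}\to 0$; its cohomology sequence shows $m_{W}$ is surjective iff $H^{1}(F_{S}^{\vee}\otimes K_{C})\to H^{1}(W^{\vee}\otimes K_{C})$ is injective, i.e. (Serre duality) iff the natural map $W=H^{0}(W\otimes\mathcal{O}_{C})\to H^{0}(F_{S})$ is surjective; since $H^{0}(S)=0$ this map is already injective, so it becomes an isomorphism $W=H^{0}(F_{S})$. For part (3), I would argue by contradiction: if $H^{0}(Q)\neq 0$, a nonzero section gives a nonzero map $\mathcal{O}_{C}\to Q$ whose saturated image $N$ is a line subbundle of $Q$ with $\deg N\geq 0$; taking the preimage $M'\subseteq M_{L}$ of $N$ produces a subbundle with $\operatorname{rk}M'=\operatorname{rk}S+1$ and $\deg M'=\deg S+\deg N\geq\deg S$, whence, using $\deg S<0$, $\mu(M')\geq\tfrac{\deg S}{\operatorname{rk}S+1}>\tfrac{\deg S}{\operatorname{rk}S}=\mu(S)$, contradicting that $S$ has the largest slope among subbundles of $M_{L}$; therefore $H^{0}(Q)=0$. (Once one knows, as in the surjectivity criterion for stable $S$, that $m_{W}$ is surjective, part (3) is also immediate from part (1).)

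I expect the main obstacle to be the bookkeeping in part (1): one must verify that the square relating $m_{V}$ and $m_{W}$ genuinely commutes — which comes down to the fact that the map $V^{\vee}\otimes\mathcal{O}_{C}\to S^{\vee}$ dual to $S\hookrightarrow V\otimes\mathcal{O}_{C}$ factors through $W^{\vee}\otimes\mathcal{O}_{C}$ by the definition of $W^{\vee}$ — and one must correctly match the connecting and restriction maps with their Serre duals. Parts (2) and (3) are then short. The saturation hypothesis on $S$ is used only so that the dual sequences stay exact; for non-saturated $S$ one would instead track the torsion of $M_{L}/S$, but I take the statement to be intended for saturated $S$.
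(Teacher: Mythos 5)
Your proof is correct, and for part (1) it is essentially the paper's argument: you compare $m_W$ with the map $H^0(L)^{\vee}\otimes H^0(K_C)\to H^0(M_L^{\vee}\otimes K_C)$, prove the latter surjective via Serre duality, and reduce surjectivity of $m_W$ to that of the restriction $H^0(M_L^{\vee}\otimes K_C)\to H^0(S^{\vee}\otimes K_C)$, hence to $H^1(Q^{\vee}\otimes K_C)=H^0(Q)^{\vee}=0$; this is exactly the diagram chase in the paper's display following (3.1). Where you genuinely diverge is in (2) and (3). For (2) the paper computes $h^1(F_S)$ from the exact sequence $0\to H^0(F_S^{\vee}\otimes K_C)\to W^{\vee}\otimes H^0(K_C)\to H^0(S^{\vee}\otimes K_C)\to 0$ and then applies Riemann--Roch to get $h^0(F_S)=w$; your route --- identifying the relevant $H^1$ map as the Serre dual of $W\to H^0(F_S)$ and using $H^0(S)=0$ for injectivity --- avoids the numerical bookkeeping entirely and makes it transparent that surjectivity of $m_W$ is literally dual to surjectivity of $W\to H^0(F_S)$. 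For (3) the paper splits into the cases $M_L$ semistable and $M_L$ unstable, estimating slopes of subbundles of $Q$ and invoking Butler's Lemma 1.12 in the unstable case; your single contradiction argument (a nonzero section of $Q$ yields a rank-$(s+1)$ subbundle $M'\subseteq M_L$ with $\mu(M')\geq\deg S/(s+1)>\mu(S)$ since $\deg S<0$) is shorter, needs only maximality of the slope of $S$ rather than its stability, and, like the paper's proof, works verbatim for incomplete $(L,V)$. Your explicit flagging of the saturation hypothesis on $S$ is a point the paper leaves implicit.
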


\noindent In the above theorem, the proof of condition (3) is valid also for a non-complete linear series $(L,V)$. 

\noindent When $C$ is a general curve in the sense of Brill-Noether, we apply the surjectivity of $m_W$ to give a different proof for the semistability of $M_L$ (see \cite{bra}, \cite{schneider}). Moreover, with the surjectivity of $m_W$ we obtain conditions to determinate the stability of $M_L$, these conditions are summarized in Lemma \ref{semistabilityML}, Section 4 (see also \cite{bra}, Lemma 3.2). With these conditions we fill up a gap in the proof of (\cite{but}, Theorem 2.2). We think that the surjectivity of $m_W$ in Theorem 1.1 is a different approach in the study of the (semi)stability of $M_L$. This is the spirit of the following result (see Theorem \ref{generalcomplete}, Section 4):

 \begin{theorem}\label{introcomplete}
	Let $C$ be a general curve of genus $g\geq 2$, and let $L\in \text{Pic}^d(C)$ be a  globally generated line bundle on $C$ of degree $d$. Consider $S\subset M_L$ a stable subbundle with $\mu(S)=\mu(M_L)$, then there exists a line bundle $F=F_S$ that fits into the following commutative diagram
\begin{equation*}	
 \xymatrix{0  \ar[r]  & S  \ar[r] \ar@{^{(}->}[d]      &  H^0(C,F)\otimes\mathcal{O}_C  \ar[r] \ar@{^{(}->}[d] & F_S \ar[r] \ar@{->}[d]^{\alpha}  & 0\\
				0\ar[r]^{}	& M_{L} \ar[r]_{}& H^0(C,L)\otimes\mathcal{O}_C  \ar[r]_{}   & L \ar[r]^{} & 0    .\\  }
				\end{equation*}
\end{theorem}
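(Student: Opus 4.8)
The plan is to use Theorem \ref{intromultiplicationsections} to reduce the statement to the assertion $\mathrm{rk}\,F_S=1$, and then to rule out higher rank by a Brill--Noether argument exploiting the generality of $C$. On a Brill--Noether general curve $M_L$ is semistable (as recalled above, and reproved via the surjectivity of $m_W$), so every subbundle of $M_L$ has slope at most $\mu(M_L)$; the hypothesis $\mu(S)=\mu(M_L)$ thus makes $S$ a stable subbundle of maximal slope. Hence Theorem \ref{intromultiplicationsections}(3) gives $H^0(Q)=0$ for $Q=M_L/S$, part (1) gives that $m_W$ is surjective, and part (2) gives $W=H^0(C,F_S)$. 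Substituting $W=H^0(C,F_S)$ into the Butler diagram \ref{butlerdiagram} of $L$ by $S$ produces exactly the displayed commutative diagram, and its top row $0\to S\to H^0(C,F_S)\otimes\mathcal{O}_C\to F_S\to 0$ shows that $F_S$ is globally generated and that $S=M_{F_S}$. So everything reduces to proving $\mathrm{rk}\,F_S=1$.

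Suppose $\rho:=\mathrm{rk}\,F_S\ge 2$, and seek a contradiction. The map $\alpha\colon F_S\to L$ is nonzero, since its composite with the evaluation $H^0(C,F_S)\otimes\mathcal{O}_C\twoheadrightarrow F_S$ is the evaluation of $W\subseteq H^0(C,L)$; hence $\alpha$ factors as $F_S\twoheadrightarrow L'\hookrightarrow L$ with $L'=L(-D)$ for an effective divisor $D$, and $N:=\ker\alpha$ is a subbundle of $F_S$ of rank $\rho-1\ge 1$. Applying the snake lemma to the Butler diagram gives $0\to N\to Q\to (H^0(C,L)/W)\otimes\mathcal{O}_C\to\mathcal{O}_D\to 0$; in particular $N\hookrightarrow Q$, so $H^0(N)=0$, and from $0\to N\to F_S\to L'\to 0$ this yields $H^0(C,F_S)\hookrightarrow H^0(C,L')$, hence $h^0(L')\ge h^0(F_S)$. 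Pulling $N$ back along $H^0(C,F_S)\otimes\mathcal{O}_C\twoheadrightarrow F_S$ produces a subsheaf $S\subsetneq S'\subseteq M_L$ with $S'/S\cong N$ and an exact row $0\to S'\to H^0(C,F_S)\otimes\mathcal{O}_C\to L'\to 0$; thus $S'=M_{W,L}$ is the syzygy bundle of the linear series $(L,W)$ (whose base locus is $D$), of rank $h^0(F_S)-1$.

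To finish, write $n=h^0(L)$ and $w=h^0(F_S)=\mathrm{rk}\,S+\rho$. Semistability of $M_L$ gives $\mu(S')\le\mu(M_L)$, which rearranges to $(n-1)\deg D\le d(n-w)$, so $w\le n$ and $\deg L'=d-\deg D\ge d(w-1)/(n-1)$; meanwhile the equality $\mu(S)=\mu(M_L)$, applied to $S=M_{F_S}$, expresses $\deg F_S=-\deg S$ as a function of $w,\rho,n,d$. I would then combine the lower bound on $\deg L'$, the inequality $h^0(L')\ge w$, and the Brill--Noether and Clifford bounds for line bundles on the general curve $C$ — using that $L$ is base-point free, so that $h^0(L(-D))$ is controlled by the geometry of the (in general incomplete) series $(L,W)$ — to contradict $\rho\ge 2$; this would force $N=0$, hence $\mathrm{rk}\,F_S=1$, and $F=F_S$ would be the asserted line bundle. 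I expect this last step to be the main obstacle: the inequalities furnished by semistability of $M_L$ alone are consistent with $\mathrm{rk}\,F_S\ge 2$, so the Brill--Noether generality of $C$ must enter in an essential way, and the crux is to control $h^0(L(-D))$ for the divisor $D$ cut out by the degeneracy of $\alpha$, which need not be general. This is exactly where Butler's original argument had its gap, and where the identity $W=H^0(C,F_S)$ — i.e. the surjectivity of $m_W$ from Theorem \ref{intromultiplicationsections} — supplies the crucial input $h^0(L')\ge h^0(F_S)$ that makes the comparison conclusive.
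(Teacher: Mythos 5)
Your reduction to the claim $\mathrm{rk}\,F_S=1$ is sound and matches the paper: semistability of $M_L$ on a general curve makes $S$ a maximal-slope stable subbundle, Theorem \ref{intromultiplicationsections} then gives $H^0(Q)=0$, surjectivity of $m_W$, and $W=H^0(C,F_S)$, and the snake-lemma bookkeeping ($N:=\ker\alpha\hookrightarrow Q$, hence $H^0(N)=0$ and $h^0(L(-D))\ge h^0(F_S)$) is correct. But the proof stops exactly where the work begins: you never actually derive a contradiction from $\mathrm{rk}\,F_S\ge 2$, and you say so yourself. Worse, the route you sketch cannot close. By Lemma \ref{semistabilityML} (which you do not invoke) the equality $\mu(S)=\mu(M_L)$ forces $h^1(L)=0$, $s=r-1$, $d=g+r$; since $s<w\le r+1$ this already pins $\mathrm{rk}\,F_S=w-s\in\{1,2\}$, and in the only remaining bad case $\mathrm{rk}\,F_S=2$ one gets $w=r+1$, so $W=H^0(L)$, your inequality reads $h^0(L(-D))\ge h^0(L)$, base-point-freeness forces $D=0$ and $L'=L$, and the inequality degenerates to the tautology $h^0(L)\ge h^0(L)$. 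So the comparison of $h^0(L(-D))$ with Brill--Noether/Clifford bounds, which you designate as the conclusive input, yields nothing precisely in the case that must be excluded.

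The paper's actual argument for excluding rank $2$ runs through $\det F_S=\det S^\vee$ rather than through $L(-D)$: a general-curve Brill--Noether count shows $h^0(\det F_S)$ is exactly $r$ (it is at least $s+1=r$ because $S^\vee$ is globally generated, and $\rho(g,\deg F_S,r+1)=g-(r+1)(g/r+1)<0$ rules out $r+1$ sections); then, if $\mathrm{rk}\,F_S=2$, the count $h^0(F_S)=r+1=1+h^0(\det F_S)$ forces $0\to\mathcal{O}_C\to F_S\to\det F_S\to 0$ to be exact on global sections, and Lemma 4.3 of Mistretta--Stoppino converts this into $\mu(S)<\mu(M_L)$, contradicting the hypothesis. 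This Brill--Noether computation on $\det S^\vee$ and the appeal to the Mistretta--Stoppino lemma are the essential content of the proof, and neither appears in your proposal; without a concrete replacement for them, the argument has a genuine gap.
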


\noindent When the curve $C$ is Brill-Noether general, Theorem \ref{introcomplete} provides a natural and intrinsic characterization of Butler's diagrams of $L$ by $S$ with $S$ of maximal slope. Moreover, the advantage of the rank of $F_S$ being one lies in the fact  that linear stability is equivalent to stability of $M_L$. It is worth pointing out that Theorem \ref{introcomplete} gives precise conditions for the stability of $M_L$. These conditions were stated in (\cite{but}, Theorem 2.2) and we add them in the following corollary (see Corollary \ref{equivalenciageneral}, Section 4).
 
\begin{corollary}\label{introequivalence}
 Let $L$ be a globally generated line bundle over a general curve $C$ of genus $g$. Suppose that $h^0(C, L)=r+1$, then 
 \begin{enumerate}
 \item Linear (semi)stability of $L$ is equivalent to (semi)stability of $M_L$.
 \item $M_L$ fails to be stable if and only if all the following three conditions hold
	 \begin{enumerate}
	  \item $h^1(C,L)=0$.
	  \item  $\text{deg}(L)=g+r$ and $r$ divides  $g$.
	  \item There is an effective divisor $Z$ with $h^0(C,L(-Z))=h^0(C,L)-1$ and $\text{deg}(Z)=1+\frac{g}{r}$.
	    \end{enumerate}
	  \end{enumerate}
	  \end{corollary}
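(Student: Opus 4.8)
The plan is to deduce Corollary \ref{introequivalence} from Theorem \ref{introcomplete} together with the surjectivity result of Theorem \ref{intromultiplicationsections} and the classical picture of linear stability.

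\medskip

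\emph{Part (1).} First recall the general fact that stability of $M_L$ always implies linear (semi)stability of $(L,H^0(L))$, so only the converse needs proof. Suppose $(L,H^0(L))$ is linearly (semi)stable but $M_L$ is not (semi)stable; then there is a subbundle $S\subset M_L$ with $\mu(S)\geq\mu(M_L)$ (resp. $>$), and we may take $S$ of maximal slope, hence stable. Here I would invoke Theorem \ref{introcomplete}: since $C$ is Brill--Noether general, the Butler diagram of $L$ by $S$ has $F_S=F$ a \emph{line bundle}, and $S\subset M_L$ sits in $0\to S\to H^0(F)\otimes\mathcal O_C\to F\to 0$ with $W=H^0(F)$ (the last equality by Theorem \ref{intromultiplicationsections}(2), whose hypothesis $H^0(Q)=0$ is supplied by Theorem \ref{intromultiplicationsections}(3)). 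Taking degrees in that exact row gives $\deg S=-\deg F$ and $\operatorname{rk}S=h^0(F)-1$, so the destabilizing inequality $\mu(S)\geq\mu(M_L)=-d/r$ translates into $h^0(F)-1\le\deg(F)\cdot r/d$, i.e.\ into a violation of the linear (semi)stability inequality for the sub-linear-series $(F,H^0(F))\subseteq(L,H^0(L))$ obtained from the vertical map $\alpha$. This contradiction proves the equivalence. The one point requiring care is that the quotient $F$ produced by the diagram genuinely gives a sub-linear-series of $(L,H^0(L))$ of the relevant degree and dimension; this is exactly what the commutativity of diagram \ref{butlerdiagram} with $F_S$ a line bundle encodes, and I would spell out the numerology $\mu(S)=\mu(M_L)\iff \frac{\deg F}{h^0(F)-1}=\frac d r$ carefully.

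\medskip

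\emph{Part (2).} Given (1), $M_L$ fails to be stable exactly when $(L,H^0(L))$ fails to be linearly stable, i.e.\ when there is a proper sub-linear-series $(F,W)$, $W=H^0(F)$ of dimension $s+1\le r$, with $\frac{\deg F}{s}\le \frac{d}{r}$. Combine this with the sharp form of Theorem \ref{introcomplete}: the maximal-slope subbundle $S$ has $\mu(S)=\mu(M_L)$ and $F_S$ is a line bundle with $h^0(F_S)=s+1$, $\deg F_S=\frac{d s}{r}$. I would then run the standard Brill--Noether estimate: on a general curve the Brill--Noether number $\rho(g,s,\deg F_S)=g-(s+1)(g-\deg F_S+s)$ must be $\ge 0$ for such an $F_S$ to exist, and pushing this inequality together with $h^1(L)\ge h^1(F_S)$ and the relation $\deg F_S=ds/r$ forces, after elementary manipulation, $h^1(L)=0$, $d=g+r$, $r\mid g$, and $\deg F_S=\frac{ds}{r}=s+\frac{gs}{r}$; taking $Z$ to be the divisor with $L(-Z)$ the saturation giving the sub-series of codimension one in $H^0(L)$ (so $s=r-1$) yields $\deg Z=d-\deg F_S=1+\frac g r$ and $h^0(L(-Z))=h^0(L)-1$. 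Conversely, if (a),(b),(c) hold one builds directly a subbundle $S=M_{L(-Z)}\subset M_L$ (via the inclusion $H^0(L(-Z))\hookrightarrow H^0(L)$) and checks $\mu(S)=\mu(M_L)$, so $M_L$ is strictly semistable, not stable.

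\medskip

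The main obstacle I anticipate is the Brill--Noether bookkeeping in the forward direction of (2): translating "$M_L$ non-stable" into the three numerical conditions requires combining the dimension count for $(F_S,H^0(F_S))$, the constraint $\rho\ge0$ on a general curve, and the slope equality, and then showing these are \emph{only} compatible in the borderline case $d=g+r$ with $r\mid g$. The surjectivity of $m_W$ (Theorem \ref{intromultiplicationsections}) is what guarantees $W=H^0(F_S)$ and hence that the relevant $h^0$ and $h^1$ are the honest ones rather than merely those of a possibly non-complete series — this is precisely the gap in Butler's original argument that the present approach closes — so I would be careful to cite it at exactly the point where completeness of $(F_S,W)$ is used.
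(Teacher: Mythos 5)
Your proposal follows essentially the same route as the paper's proof: Theorem~\ref{introcomplete} (= Theorem~\ref{generalcomplete}) reduces any destabilization of $M_L$ to a sub-linear-series because $F_S$ has rank one; the Brill--Noether estimate on $\det(S^{\vee})$ produces (a), (b) and $s=r-1$ (this is exactly the content of Lemma~\ref{semistabilityML}); the surjectivity of $m_W$ from Theorem~\ref{intromultiplicationsections} gives $h^0(L(-Z))=h^0(F_S)=w=h^0(L)-1$, i.e.\ condition (c); and the converse of (2) is the direct construction $S=M_{L(-Z)}$ with $\mu(S)=\mu(M_L)$. One step as written would fail, though: in part (1) you take $S$ of maximal slope with $\mu(S)\geq\mu(M_L)$ (resp.\ $>$) and then invoke Theorem~\ref{introcomplete}, but that theorem's hypothesis is the exact equality $\mu(S)=\mu(M_L)$, so it says nothing about a strictly destabilizing $S$ and cannot by itself handle the ``semistable'' half of the equivalence. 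The missing ingredient is the unconditional semistability of $M_L$ on a Brill--Noether general curve (Lemma~\ref{semistabilityML}, whose proof is precisely the Brill--Noether count you run in part (2)): it rules out $\mu(S)>\mu(M_L)$ altogether, makes linear semistability versus semistability automatic, and places you in the case $\mu(S)=\mu(M_L)$ where Theorem~\ref{introcomplete} applies. With that observation inserted at the start of part (1), your argument coincides with the paper's.
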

  
\noindent To prove Corollary 1.3 we apply Theorems \ref{intromultiplicationsections} and \ref{introcomplete}. The proof of Corollary 1.3 shows the importance of the surjectivity of the multiplication map $m_W$ to obtain the above condition (c). Also, this condition is not included in (\cite{bra}, Proposition 3.5)

\vskip2mm

\noindent Finally we give the following criterium on $\gamma$-gonal curves to state the equivalence between linearly stable and stability of $M_L$ (see Theorem \ref{gonalinyectivo}):

 \begin{theorem}\label{introgonal}
	Let $L\in \text{Pic}^d(C)$ be a globally generated line bundle over a curve $C$ of gonality $\gamma$ and suppose that $h^0(C,L)=r+1$. Consider a line bundle $B$ on $C$ such that $|B|=g^1_{\gamma}$, and suppose that $d>\gamma\cdot  r$. Then, if the multiplication map 
 \begin{equation*}
\xymatrix{   \mu_{L}^{\gamma}:=H^0(C,L)\otimes H^0(C,B) \ar[r]^{} & H^0(C,L\otimes B)}
 \end{equation*}

 is not injective, then $L$ is not linearly semistable.
	 \end{theorem}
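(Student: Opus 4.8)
The plan is to connect the non-injectivity of $\mu_L^\gamma$ with the existence of a destabilizing subsheaf of $M_L$ via a Butler-type diagram built from the pencil $|B|$. First I would observe that $\mu_L^\gamma$ fits into the cohomology sequence obtained by tensoring $0\to M_B\to H^0(B)\otimes\mathcal O_C\to B\to 0$ with $L$ (or symmetrically $M_L$ with $B$): a nonzero element of the kernel of $\mu_L^\gamma$ produces a nonzero map $L^\vee\to M_B\otimes\text{(something)}$, equivalently a nonzero section of $M_B^\vee\otimes L$ that is "decomposable". Concretely, since $M_B$ has rank $1$ (as $h^0(B)=2$), we have $M_B\cong B^{-1}$ wait—more precisely $M_B=\det(M_B)=\mathcal O_C(-B)$ has degree $-\gamma$, so $M_B\cong\mathcal O_C(-D)$ for the base divisor issues aside, and $\ker\mu_L^\gamma\neq 0$ says there is a subspace $W\subseteq H^0(L)$ and an inclusion $\mathcal O_C(D)\hookrightarrow L$ realizing sections of $L$ that, after twisting by $B$, become syzygetic. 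The cleanest route: a nonzero class in $\ker\mu_L^\gamma$ gives a nontrivial extension class / a sub-line-bundle $N\hookrightarrow M_L$ coming from $M_B\otimes\mathcal O_C(-Z)$ for suitable $Z\ge 0$.

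Next I would compute slopes. We have $\mu(M_L)=-d/r$. The candidate destabilizing subbundle $N\subseteq M_L$ arising from the pencil has degree $-\gamma$ (up to effective twists that only decrease degree further, helping us) and rank $1$, so $\mu(N)=-\gamma\ge -\gamma$. Linear semistability of $(L,H^0(L))$ is, by the standard dictionary, the statement that for every sub-linear-series $(L',W')$ generated by $W'\subseteq H^0(L)$ with $\dim W'=r'+1$ one has $\deg(L')/r'\ge d/r$; equivalently every saturated subbundle $S\subseteq M_L$ of rank $s$ has $\deg S/s\le -d/r$ — wait, one must be careful: linear stability of $(L,V)$ translates to the condition on quotients of $M_L$, or equivalently sub-bundles; I would use the formulation that $(L,H^0(L))$ linearly semistable $\iff$ for all quotient line bundles $L\twoheadleftarrow$ generated by $\dim=s+1$ sections the inequality holds, and trace through diagram (\ref{butlerdiagram}) to put this in terms of $N\subseteq M_L$. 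The inequality $\mu(N)>\mu(M_L)$ then reads $-\gamma>-d/r$, i.e. $d>\gamma r$, which is precisely the hypothesis. So a destabilizing $N$ exists exactly under $d>\gamma r$, contradicting linear semistability.

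The key technical step — and the main obstacle — is the construction of the sub-line-bundle $N\hookrightarrow M_L$ from a nonzero element $\xi\in\ker\mu_L^\gamma$, and the verification that $\deg N\ge -\gamma$ (not strictly less). I expect to argue as follows: writing $B=\mathcal O_C(D)$ with $D$ the divisor of a pencil section, $\xi\in\ker(H^0(L)\otimes H^0(B)\to H^0(L\otimes B))$ corresponds under $0\to \mathcal O_C(-D)\to H^0(B)\otimes\mathcal O_C\to B\to 0$ tensored with $L$ to a section of $H^0(L(-D))$ together with compatibility data, i.e. $\xi$ lives in the image of $H^0(L(-D))\otimes$-something; more carefully, the kernel of $\mu_L^\gamma$ is identified (using $h^1$ of the relevant twist, or just the long exact sequence) with a space measuring how sections of $L$ fail to be "spread out" by $D$, and this produces an inclusion $M_B\hookrightarrow M_{W,L}\subseteq M_L$ for the subspace $W$ spanned by the relevant sections, possibly after removing base points (an effective twist $-Z$, which only lowers the degree and strengthens the destabilizing inequality). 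I would lean on the Butler diagram machinery already set up: take $S\subseteq M_L$ the saturation of the image of this map, note $\text{rk}\,S=1$, $\deg S\ge\deg M_B=-\gamma$, and apply the slope inequality. The delicate point is ensuring $W$ has the right dimension so that the linear-stability inequality is genuinely violated — this is where $d>\gamma r$ enters quantitatively, and where I would be most careful, likely using Theorem \ref{intromultiplicationsections}(1) (surjectivity of $m_W$ $\iff$ $H^0(Q)=0$) to control the cohomology bookkeeping, or a direct base-point-free pencil trick computation.
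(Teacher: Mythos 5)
Your plan follows essentially the same route as the paper: identify $\ker\mu_L^\gamma$ with $H^0(M_L\otimes B)$ via the long exact sequence, produce a rank-one subsheaf of $M_L$ from the pencil, and violate linear semistability using $d>\gamma r$.

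The step you flag as ``the main obstacle'' is exactly where the paper's argument is cleanest, and you name the right tool without quite deploying it: the base-point-free pencil trick gives $\ker\mu_L^\gamma\cong H^0(L\otimes B^{\vee})$ on the nose, so a nonzero kernel element is precisely a nonzero map $B\to L$, i.e.\ an inclusion $B\hookrightarrow L$ of sheaves. This immediately yields the commutative diagram with top row $0\to B^{\vee}\to H^0(B)\otimes\mathcal O_C\to B\to 0$ sitting inside the evaluation sequence of $L$; the destabilizing object is $B^{\vee}=M_{H^0(B),B}$ with $W=H^0(B)$ of dimension exactly $2$ and degree exactly $-\gamma$, so it is genuinely a subbundle of the form $M_{W,L'}$ and the failure of \emph{linear} semistability (not merely of semistability of $M_L$) follows from $\gamma=\deg(B)/(\dim W-1)<d/r$. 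No twist by an effective $-Z$ ever enters, and your parenthetical that such twists ``only decrease degree further, helping us'' is backwards for a subbundle of $M_L$: lowering the degree of $N\subseteq M_L$ lowers $\mu(N)$ and weakens the destabilization (it is the saturation, with degree $\geq-\gamma$, that can only help, as you correctly say later). With the pencil trick inserted, your outline closes up into the paper's proof.
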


This article is organized as follows. In Section 2, we review some standard facts on the Butler's diagram and the linear stability for a linear series $(L,V)$. In Section 3 we study properties of the Butler's diagram to prove Theorem \ref{intromultiplicationsections}. We consider the case when $C$ is a Brill-Noether general curve and we prove Theorem \ref{introcomplete} (see Theorem \ref{generalcomplete}, section 4) and  Corollary \ref{introequivalence} (see Corollary \ref{equivalenciageneral}, section 4). Finally in Section 5 we give a criterium between the equivalence of linear stability of $(L,H^0(L))$ and the stability of $M_L$ on $\gamma$-gonal curves (see Theorem \ref{gonalinyectivo}).

\section{preliminaries and notation}
  
 Let $C$ be  a smooth projective and irreducible curve of genus $g\geq 2$ defined over an algebraically closed field $k$ of characteristic zero. Given a vector bundle $E$ over $C$ we denote by $d_E$ (or $\text{deg}(E))$ the degree of $E$, and by $n_E$ the rank (or $\text{rk}(E)$) of 
$E$. The slope of $E$ is defined as the rational number $\mu(E):=\frac{d_E}{n_E}$. For abbreviation, we write $H^i(E)$ instead of $H^i(C,E)$, whenever it is convenient. As usual, $h^i(E)=\text{dim}_k H^i(E)$.

\begin{remark}\label{desestabilizante}
	\begin{em}
		Consider a Butler's diagram of $(L,V)$ by $S$ as in (\ref{butlerdiagram}). The following properties hold (see \cite{butler}).
		
		\begin{enumerate}
	
          	\item $W$ is a  subspace of $H^0(F_S)$.
		
		\vspace{.05cm}
		
			\item The sheaf $F_S$  is generated and $h^0(F_S^{\vee})=0.$
		
		\vspace{.05cm}
		
			\item The induced morphism $\alpha: F_S\rightarrow L$ is not zero.
			
			\vspace{.05cm}

			\item Since $S^{\vee}$ is a generated subbundle with no trivial summands, it follows that the rank of $S$ is smaller than the dimension of $W$.
			
			\vspace{.05cm}
			
			\item Let $S\subset M_{V,L}$ be a subbundle of maximal slope, then $\text{deg}(F_S)\leq\text{deg}(I)$, where $I$ is $\text{Im}(\alpha)$.
			Moreover, if $S$ is the destabilizing bundle of $M_{V,L}$, then  $\text{rk}(F_S)=1$ if and only if $\text{deg}(F_S)=\text{deg}(I)$.

		\end{enumerate}
	\end{em}
\end{remark}

\noindent In the Butler's diagram (1.2), we denote by $w$ the dimension of $W$, by $s$ the rank of $S$ and by $f$ the degree of $F_S$.  
\vskip2mm
 
\noindent We recall that  a vector bundle $E$ is stable (semistable) if for all non-trivial subbundle $F\subset E $
\begin{eqnarray*}
 \mu(F)<\mu(E) \hspace{.6cm} (\text{resp.} \leq).
\end{eqnarray*}
If $E$ is not semistable, then we say that $E$ is unstable. To understand unstables bundles, there exists the invariant, $\mu^+(E) $, which is defined as the maximum of all slopes of subbundles $F\subset E$.

\noindent We remark that:
\begin{enumerate}
 \item $\mu^+(F)\leq \mu^+(E)$ for all subbundle $F\subset E$.
 \item $\mu^+(F)= \mu^+(E)$ if $F$ is maximal slope subbundle. 
 \item $\mu^+(E)= \mu(E)$  if and only if $E$ is semistable.
 
\end{enumerate}

\begin{definition}
\begin{em}
Let $(L,V)$ be a generated linear series of type $(d,r+1)$ on a curve $C$.  We say that $(L,V)$ is linearly semistable (respectively linearly stable) if for any linear subspace $W\subset V$ of dimension $w$, 
\begin{eqnarray*}
\frac{\text{deg}(L^{'})}{w-1}\geq\frac{\text{deg}(L)}{r} \hspace{.5cm}  (\text{respectively} >),
\end{eqnarray*}
where $L^{'}$ is the line bundle generated by $W$. That is, there exists the following commutative diagram 
\begin{equation*}
  \xymatrix{0  \ar[r]  & M_{W,L^{'}}   \ar[r] \ar@{^{}->}[d]      &  W \otimes\mathcal{O}_C  \ar[r] \ar@{^{}->}[d] & L^{'}   \ar[r] \ar@{->}[d]^{}  & 0\\
				0\ar[r]^{}	& M_{V,L} \ar[r]_{}& V\otimes\mathcal{O}_C  \ar[r]_{}   & L \ar[r]^{} & 0    .\\  }
				\end{equation*}
				\end{em}

\end{definition}
\noindent and the condition of being linearly (semi)stable is equivalent to the bundle $M_{V,L}$ can not be destabilized by subbundles of the form  $M_{W,L^{'}}$, where $(L^{'},W)$ is a generated subseries of $(L,V).$

 We are interested in providing conditions for the following Conjecture to be true.
 
\begin{conjecture}(\cite{mistretastopino}, Conjecture 8.7)\label{mis2}
	 Let $C$ be any curve, and let $L$ be a globally generated line bundle on $C$. The linear (semi)stability of $(H^0(C,L),L)$ is equivalent to (semi)stability for $M_L$.
\end{conjecture}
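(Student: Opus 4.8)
The plan is to treat the two implications separately, since one of them holds for an arbitrary curve while the other carries all the difficulty. For the implication ``(semi)stability of $M_L$ $\Rightarrow$ linear (semi)stability of $(H^0(C,L),L)$'', I would note that every generated subseries $(L',W)$ of $(L,H^0(C,L))$, with $\dim W = w$, produces through the commutative square defining linear stability a subbundle $M_{W,L'}\subset M_L$ of rank $w-1$ and degree $-\deg(L')$. The inequality $\mu(M_{W,L'})<\mu(M_L)$ (respectively $\leq$) forced by stability of $M_L$ is exactly the linear stability inequality $\deg(L')/(w-1)>\deg(L)/r$ (respectively $\geq$), recalling $\mathrm{rk}(M_L)=r$ and $\deg(M_L)=-d$. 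So this direction is formal and needs no hypothesis on $C$.

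For the converse I would argue by contradiction. Assume $M_L$ fails to be semistable (respectively stable). Choosing among all proper subbundles one of maximal slope and minimal rank, and passing if necessary to a stable Jordan--Hölder factor of the same slope, I may select a subbundle $S\subset M_L$ that is \emph{stable of maximal slope} with $\mu(S)\geq\mu(M_L)$, the inequality being strict when the goal is semistability. I then form the Butler's diagram~\ref{butlerdiagram} of $L$ by $S$, producing $W\subset H^0(C,L)$, the bundle $F_S$, and the morphism $\alpha\colon F_S\to L$. Since $S$ is stable of maximal slope, Theorem~\ref{intromultiplicationsections}(3) gives $H^0(Q)=0$ for $Q=M_L/S$; then part~(1) of the same theorem shows that $m_W$ is surjective, and part~(2) yields $W=H^0(C,F_S)$.

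The crux is then to establish that $\mathrm{rk}(F_S)=1$. Granting this, $F_S$ is a line bundle $L'$ generated by $W$ via $\alpha$, and the top row of the Butler's diagram reads $0\to S\to W\otimes\mathcal{O}_C\to L'\to 0$, identifying $S$ with $M_{W,L'}$ for the generated subseries $(L',W)$ of $(L,H^0(C,L))$. The destabilizing inequality $\mu(S)\geq\mu(M_L)$ then translates into $\deg(L')/(w-1)\leq\deg(L)/r$ (strict when semistability is the goal), which directly contradicts linear (semi)stability of $(H^0(C,L),L)$. This contradiction proves the converse, and together with the first paragraph the asserted equivalence.

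The main obstacle, and the reason the statement remains a conjecture in this generality, is precisely the step $\mathrm{rk}(F_S)=1$. By Remark~\ref{desestabilizante}(5) this is equivalent to $\deg(F_S)=\deg(\mathrm{Im}(\alpha))$, that is, to $\alpha$ factoring generically injectively through a sub-line-bundle; a priori the image of $\alpha$ may have rank $\geq 2$. I expect the surjectivity of $m_W$ to be the right tool to force the rank-one conclusion, since it rigidifies $W=H^0(C,F_S)$ and thereby constrains both the degree of $F_S$ and its number of sections; under a genericity hypothesis an expected-dimension count for the relevant Brill--Noether locus should then exclude a rank $\geq 2$ bundle $F_S$ carrying $w$ sections at the degree permitted by the slope condition. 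This is exactly the content of Theorem~\ref{introcomplete}, which secures $\mathrm{rk}(F_S)=1$ on a Brill--Noether general curve and hence establishes Conjecture~\ref{mis2} in that case, while on $\gamma$-gonal curves Theorem~\ref{introgonal} plays the analogous role through the pencil $B=g^1_\gamma$. Handling $\mathrm{rk}(F_S)$ on an \emph{arbitrary} curve, without such genericity, is the essential difficulty that remains.
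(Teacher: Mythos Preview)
Your proposal is not a proof but an honest outline that correctly identifies the statement as a conjecture the paper does \emph{not} establish in full generality; this is the right reading. Your strategy coincides with the paper's: the forward implication is formal, and for the converse one takes a stable subbundle $S\subset M_L$ of maximal slope, invokes Theorem~\ref{intromultiplicationsections} to get $H^0(Q)=0$ and $W=H^0(F_S)$, and then needs $\mathrm{rk}(F_S)=1$ to exhibit a destabilizing subseries. You have pinpointed the genuine obstacle exactly where the paper does.

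Two small sharpenings are worth noting. First, in the Brill--Noether general case the paper does not only use the surjectivity of $m_W$ to control $F_S$; Lemma~\ref{semistabilityML} first forces $s=r-1$ via a Brill--Noether count on $\det(S^\vee)$, which already pins $\mathrm{rk}(F_S)\in\{1,2\}$, and then a further count on $\det(F_S)$ together with Lemma~4.3 of \cite{mistretastopino} rules out rank~$2$. So the expected-dimension argument you anticipate is applied twice, once to $S$ and once to $F_S$. Second, your description of the $\gamma$-gonal case is slightly off: Theorem~\ref{gonalinyectivo} does not play the ``analogous role'' of forcing $\mathrm{rk}(F_S)=1$ for a given $S$; rather, when $d>\gamma r$ and $\mu_L^\gamma$ is not injective it \emph{produces} a specific destabilizing line subbundle $B^\vee\subset M_L$ directly, bypassing the analysis of an arbitrary $S$. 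The paper only deduces the full conjecture for hyperelliptic curves from this (Corollary~5.5), combining the range $d>2r$ via Lemma~\ref{seccionesconjecture} with the range $d\leq 2r$ via \cite{mistretastopino}, Theorem~1.1; for general $\gamma$-gonal curves it remains a criterion, not a resolution.
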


\noindent In (\cite{mistretastopino}, Conjecture 8.6), the authors also conjectured the equivalence between the stability of $M_{V,L}$ and the linear stability of the linear series $(L,V)$ when $d<\gamma r$. It is not our purpose  to study the incomplete case here, but we  give some remarks in this direction. 

\noindent Note that if $M_L$ is stable, then the above conjecture is true. Therefore, from now on we make the following  assumption: $ M_ {L} $ is strictly semistable or unstable. Let
\begin{eqnarray*}
 A:=\{S\subset M_{L} \hspace{.3cm}| \hspace{.3cm} \mu(S)\geq \mu(M_L) \}.
 \end{eqnarray*} 
We have the following remarks:
\begin{remark}\label{3} \begin{em}

\begin{itemize}

\item[(a)]If we find a bundle $S\in A$ such that the rank of the vector bundle $F_S$ that appear in Butler's diagram is one,  then the Conjecture \ref{mis2} follows.
        
        \vspace{.3cm}
        
        \item[(b)] If $h^0(L)=2$, then $M_L$ is a line bundle and the Conjecture \ref{mis2} follows.
       
       \vspace{.3cm}
       
        \item[(c)] Consider $(L,V)$  a generated linear series of type $(d,3)$ over a curve  $C$ of gonality $\gamma>\frac{d}{2}$. We remark that $M_{V,L}$ is stable bundle of rank $2$: suppose that $M_{V,L}$ is not stable, then there is a line bundle $S$ of maximal slope and $\text{deg}(S)\geq -\frac{d}{2}=\mu(M_{V,L})>-\gamma$. Since $S^{\vee}$ is a line bundle generated by $W^{\vee}$, we have that  there exists a subspace $W_{S}\subset W^{\vee}$ of dimension $2$ that generates $S^{\vee}$. The linear series $(S^{\vee},W_S)$ induces a morphism 
\begin{equation*}
\xymatrix{  \phi:C  \ar[r]^{}& \mathbb{P}^1}
 \end{equation*}
of degree $\text{deg}(S^{\vee})<\gamma$ which  contradicts the gonality of $C$. By the same argument we see that $M_{V,L}$ can not be destabilized by a line bundle $S$ when $d<\gamma r$.

               \end{itemize}
                \end{em}
\end{remark} 

We recall the following lemma that we will apply:

\begin{lemma}\label{butler}({\bf Butler}, \cite{butler}, Lemma 1.10)
Let $C$ be a curve of genus $g\geq 2$, $F$ a vector bundle on $C$ with non trivial summands such that $h^1(F)\neq 0$. Suppose that $V\subset H^0(F)$ generates $F$. If $S=M_{V,F}$ is stable, then $\mu(S)\leq -2.$ Furthermore, $\mu(S)=-2$ implies that
\begin{enumerate}
\item $C$ is a hyperelliptic curve, $F$ is the hyperelliptic bundle and $S$ its dual, or 
\item $F=K_C$ and $S=M_K$.
\end{enumerate}
\end{lemma}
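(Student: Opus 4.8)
The plan is to reduce the bound $\mu(S)\le -2$ to an application of Clifford's theorem to a suitable special line bundle extracted from $F$, and to use the stability of $S$ to pin down the equality cases. First I would record the numerology. From $0\to S\to V\otimes\mathcal O_C\to F\to 0$ one gets $\mathrm{rk}(S)=\dim V-\mathrm{rk}(F)$ and $\deg(S)=-\deg(F)$, so $\mu(S)=-\deg(F)/(\dim V-\mathrm{rk}(F))$. Moreover $\deg(F)>0$ (a generated bundle of degree $0$ is trivial, which is excluded since $F$ has no trivial summands), hence $\mu(S)<0$; in particular the stable bundle $S$ has no trivial subbundle, since a subbundle $\cong\mathcal O_C^{\oplus k}$ would have slope $0>\mu(S)$. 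The asserted inequality is therefore equivalent to $\deg(F)\ge 2\,\mathrm{rk}(S)$.

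Next, by Serre duality $h^1(F)=h^0(F^\vee\otimes K_C)=\dim\mathrm{Hom}(F,K_C)$, so the hypothesis $h^1(F)\neq 0$ yields a nonzero homomorphism $\varphi\colon F\to K_C$. Let $N:=\mathrm{Im}(\varphi)\subseteq K_C$; this is a sub-line-bundle $N=K_C(-E)$ with $E\ge 0$ effective, and the composite $V\otimes\mathcal O_C\twoheadrightarrow F\twoheadrightarrow N$ shows that $N$ is generated by the image $V_N$ of $V$ in $H^0(N)$. Since $K_C\otimes N^{-1}=\mathcal O_C(E)$ is effective, $N$ is special, so Clifford's theorem gives $\dim V_N-1\le h^0(N)-1\le\tfrac12\deg(N)$; equivalently, the kernel bundle $M_{V_N,N}$ has slope $\le -2$.

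The core of the argument is to transfer this bound to $S$. Put $F':=\ker(F\to N)$ (of rank $\mathrm{rk}(F)-1$) and $G:=\ker(V\otimes\mathcal O_C\to N)=M_{V,N}$; the snake lemma gives $0\to S\to G\to F'\to 0$, while writing $U:=\ker(V\to V_N)$ gives $0\to U\otimes\mathcal O_C\to G\to M_{V_N,N}\to 0$. When $\mathrm{rk}(F)=1$ the map $\varphi$ is injective, so $F=N$, $F'=0$ and $G=S$; the trivial subbundle $U\otimes\mathcal O_C\subseteq S$ must then vanish by stability, forcing $S\cong M_{V_N,N}$ and $V\hookrightarrow H^0(N)$, whence $\mu(S)\le -2$ follows directly from Clifford. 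For $\mathrm{rk}(F)>1$ I would induct on $\mathrm{rk}(F)$, peeling off the rank-one quotient $N$ and applying the inductive hypothesis to $F'$. I expect this transfer to be the main obstacle: the subtlety is precisely that $V\to H^0(N)$ need not be injective, so $G=M_{V,N}$ is generally far from semistable (it contains the trivial subbundle $U\otimes\mathcal O_C$ of slope $0$), and it is only the stability of $S$ — which forbids both trivial and high-slope subbundles, and which forces the image of $S$ in $M_{V_N,N}$ to be large — that removes the resulting slack and yields $\mu(S)\le -2$.

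Finally, for the equality case I would track when every inequality above is sharp. Equality $\mu(S)=-2$ forces equality in Clifford's theorem for $N$ together with $V_N=H^0(N)$, and forces the rank reduction to be tight, i.e. $F'=0$ and $F=N$ a line bundle. By the classification of equality in Clifford's theorem, $N$ is $\mathcal O_C$, $K_C$, or a multiple $mA$ of the hyperelliptic pencil $A$ on a hyperelliptic curve. The case $N=\mathcal O_C$ gives $S=0$ and is excluded; for $N=mA$ one has, pulling back from $\mathbb P^1$ along the hyperelliptic map, $M_{mA}\cong (A^{-1})^{\oplus m}$, which is stable only for $m=1$, giving case (1) with $F=A$ the hyperelliptic bundle and $S=M_A\cong A^\vee$; and $N=K_C$ gives case (2), $F=K_C$ and $S=M_{K_C}$. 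Thus stability of $S$ is exactly what discards $\mathcal O_C$ and the multiplicities $m\ge 2$, leaving precisely the two stated possibilities.
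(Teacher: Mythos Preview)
The paper does not supply a proof of this lemma; it is simply quoted from Butler's paper \cite{butler} as a tool to be used later (in Remark~\ref{conditionsof}). So there is no proof in the paper to compare against.

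As for the merits of your sketch: the case $\mathrm{rk}(F)=1$ is handled correctly. There $F$ is itself a special line bundle (since $h^1(F)\neq 0$ gives $F\hookrightarrow K_C$), so Clifford yields $\dim V-1\le h^0(F)-1\le\tfrac12\deg(F)$ and hence $\mu(S)\le -2$; your equality analysis via the classification of equality in Clifford's theorem, together with $M_{mA}\cong (A^{-1})^{\oplus m}$ on a hyperelliptic curve, is likewise correct.

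The gap is in rank $>1$. The induction you outline is not well-posed: $F'=\ker(F\to N)$ need not be globally generated, need not satisfy $h^1(F')\neq 0$, and $S$ is not of the form $M_{V',F'}$ for any subspace $V'$, so there is no inductive instance to invoke. Nor does the embedding $S\hookrightarrow G$ help directly: in higher rank the trivial piece $U\otimes\mathcal O_C$ lies in $G$ but not in $S$, so stability of $S$ no longer forces $U=0$; and even if the composite $S\to M_{V_N,N}$ were injective, the target is generally not semistable, so a subsheaf may well have slope exceeding $-2$. Your remark that stability ``forces the image of $S$ in $M_{V_N,N}$ to be large'' actually points the wrong way: stability bounds slopes of \emph{quotients} of $S$ from below and of \emph{subbundles} from above, whereas what is needed here is an upper bound on $\mu(S)$ itself, coming from $S$ sitting inside something with controlled $\mu^+$. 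Consequently the equality-case reduction to $F'=0$ (hence $\mathrm{rk}(F)=1$) is also unproven. Closing this requires the extra argument in Butler's original proof; the snake-lemma decomposition you set up, together with bare stability of $S$, is not sufficient.
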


\section{Butler's diagram}

In this section  we are interested in finding stability properties for vector bundles that appear in the Butler's diagram. Consider a Butler's diagram of $L$ by $S$ and denote by $Q:=M_L/S$ the quotient of $M_L$ by $S$.

\begin{remark}\label{estrictoW} 
\begin{em}
\noindent Let $C$ be a curve and $L\in\text{Pic}(C)$ a globally generated line bundle such that $h^{0}(L)=h^0(M_L^{\vee})$. We remark that $W\subsetneq H^0(L)$  if and only if $H^{0}(Q^{\vee})\neq 0$.
 To see this, note that $W\subsetneq H^0(L)$ if and only if $\phi:H^{0}(L)^{\vee}\rightarrow H^0(S^{\vee})$ is not injective. The morphism $\phi$ factorizes as follows
 \begin{equation*}
  \xymatrix{ \phi:H^{0}(L)^{\vee} \ar[r]^{\phi_1} & H^0(M_L^{\vee}) \ar[r]^{\phi_2}& H^0(S^{\vee}).\\}
	 \end{equation*}
 Since  $h^{0}(L)=h^0(M_L^{\vee})$, from the exact sequence
\begin{equation*}
 \xymatrix{ 0 \ar[r]^{}  & H^{0}(L^{\vee}) \ar[r]^{} & H^{0}(L)^{\vee} \ar[r]^{\phi_1} &  H^0(M_L^{\vee}) \ar[r]^{} & \ldots, }
\end{equation*}

\noindent we have that $\phi_1$ is an isomorphism. Thus, $\phi$ es injective if and only if $\phi_2$ is injective. But the condition  $h^{0}(Q^{\vee})\neq 0$ is equivalent that $\phi_2$ can not be injective. 

\noindent When $C$ is a general curve in the sense of Brill-Noether, the condition $h^0(L)=h^0(M_L^{\vee})$ hold (see \cite{montserrat}, Theorem 2.4 ).
\end{em}
\end{remark}

Now we prove the Theorem \ref{intromultiplicationsections}:
	
\begin{proof} We prove (1). Dualizing and twisting the Butler's diagram (\ref{butlerdiagram}) of $L$ by $S$ by the canonical line bundle $K_C$, we obtain
\begin{equation} \label{butlertensdualK}
				 \xymatrix{0  \ar[r]  & L^{\vee}\otimes K_C  \ar[r] \ar@{->}[d]      & H^0(L)^{\vee}\otimes K_C \ar[r] \ar@{->}[d] & M_L^{\vee}\otimes K_C \ar[r] \ar@{->}[d]  & 0\\
				  0  \ar[r]   &   F_S^{\vee}\otimes K_C  \ar[r]         &  W^{\vee}\otimes K_C \ar[r]    &  S^{\vee}\otimes K_C   \ar[r]           & 0      .\\  }
				\end{equation}
By the above diagram, we have 
				\begin{equation}\label{surjectivem0}
				\xymatrix{H^0(L)^{\vee}\otimes H^0(K_C) \ar[r]^{m_1} \ar@{->}[d]^{p_1}   & H^0(M_L^{\vee}\otimes K_C) \ar[r] \ar@{->}[d]^{p_2} & H^1(L^{\vee}\otimes K_C) \ar[r]^{\delta}&H^0(L)^{\vee}\otimes H^1(K_C) \ar[r] & 0 \\
				W^{\vee}\otimes H^0(K_C) \ar[r]^{m_W} & H^0(S^{\vee}\otimes K_C)     \ar@{->}[d] \\	
				  & H^1(Q^{\vee}\otimes K_C) \ar@{->}[d] \\
				    & H^1(M_L^{\vee}\otimes K_C).} 
				\end{equation}

\noindent First note that $p_1$ is surjective map because $W$ is a subspace of $H^0(L)$. The map $\delta$ is the dual of the map $H^0(L)\otimes H^0(\mathcal O_C)^{\vee}\to H^0(L)^{\vee}$ which is an isomorphism by Serre duality pairing, therefore, $\delta$ is an isomorphism and $m_1$ is surjective. Hence $m_W$ is surjective map if and only if $p_2$ is. But, by Serre duality $H^1(M_L^{\vee}\otimes K_C)=0$, the condition for surjectivity of $p_2$ is $H^1(Q^{\vee}\otimes K_C)=0$. This proves $(1)$.

\noindent To prove (2), by hypothesis and diagram (\ref{butlertensdualK}), we have the following exact sequence
	\begin{equation}\label{exactm0}
	  \xymatrix{ 0 \ar[r]^{} & H^0(F_S^{\vee}\otimes K_C) \ar[r]^{}  & W^{\vee}\otimes H^0(K_C) \ar[r]^{m_W} & H^0(S^{\vee}\otimes K_C) \ar[r]^{} & 0.\\}
	 \end{equation}

\noindent From (\ref{exactm0}) and the fact that $h^0(S)=0$, we conclude that $h^1(F_S)=(w-s)\cdot g+\text{deg}(S)+s$. By Riemann-Roch Theorem, we get
	 \begin{eqnarray*}
	  h^0(F_S)&=&f+\text{rk}(F_S)(1-g)+h^1(F_S)\\
	   &=&\text{rk}(F_S)+s\\
	   &=&w.
	 \end{eqnarray*}
\noindent Since $W\subset H^0(C,F_S)$,  we have $W=H^0(F_S)$.

\noindent We prove condition (3). First suppose that $M_L$ is semistable  and  $S\subset M_L$ is of maximal slope.  Consider  $G\subset Q$  a subbundle,  then we get 
\begin{eqnarray}\label{subbundleofQ}
\mu(G)\leq\mu(S)\leq \mu(M_{L})\leq \mu(Q),
\end{eqnarray}

\noindent this implies that $Q$ is semistable. Moreover,
\begin{eqnarray*}
 \text{deg}(Q)=\text{deg}(M_L)-\text{deg}(S)= -d +\text{deg}(F)<0.
\end{eqnarray*}
Since $Q$ is semistable of negative degree, we conclude that $H^{0}(Q)=0$.
 \vspace{.3cm}
\item Now suppose that $M_{L}$ is unstable and $S\subset M_{L}$ is of maximal slope, then we have $\mu(Q)<\mu(S)$.  Moreover, since $S\subset M_{V,L}$ stable of maximal slope,  we conclude that  $\mu(G)<\mu(S)$ for all $G\subset Q$ subbundle. Consequently, $\mu^+(Q)<\mu(S)=\mu^+(S)< 0$, and $H^{0}(Q)=0$ (see \cite{butler}, Lemma 1.12). This finishes the proof of the theorem. \end{proof}

  The Butler's diagram (1.2) is a particular case of a more general context, that is, the Lazarsfeld-Mukai bundle $M_{V,E}$ and the Butler's diagram, are defined for a globally generated coherent system $(E,V)$ of higher rank over a curve in the same way that for linear series (see for instance \cite{but}). The Theorem \ref{intromultiplicationsections} remains true for a generated coherent system $(E,H^0(E))$. Also, the properties $(1)-(4)$ of the Remark \ref{desestabilizante} are still valid, and the property (5) is as follow: Let $S\subset M_{V,E}$ be a subbundle of maximal slope, then 
 $\text{deg}(F_S)\leq \text{deg}(I)$ where $I:=\text{Im}(\alpha)$. Moreover, if $S$ is the destabilizing bundle of $M_{V,E}$, then  $\text{rk}(F_S)=\text{rk}(I)$ if and only if $\text{deg}(F_S)=\text{deg}(I)$.

\begin{remark}\label{conditionsof}
\begin{em}

 Let $L\in \text{Pic}^d(C)$ be a globally generated line bundle over a curve $C$ and set $h=h^1(L)$. Consider $S\subset M_L$ be a subbundle, we have two conditions about the vector bundle $M_L$. The first one, is to compare the slopes of $S$ and $M_L$ in terms of degree and rank of vector bundle $F_S$. The second one is proving that $h^1(F_S)$ is  bounded by the genus of the curve, the ranks of $F_S$ and $S$ when  $M_L$ is unstable and $S\subset M_L$ is of maximal slope:
 
\begin{enumerate}
\item  Suppose that $f\cdot(g-h)\leq d\cdot(\text{rk}(F_S)\cdot g-h^1(F_S))$. Applying Riemann-Roch Theorem, we have
\begin{eqnarray*}
  \mu(S)=-\frac{f}{w-\text{rk}(F_S)}   & \leq & -\frac{f}{f-\text{rk}(F_S)\cdot g +h^1(F_S)}\\
                                                &\leq&-\frac{d}{d- g +h}\\
                                                 &=&\mu(M_L).
                                                 \end{eqnarray*}
 
\noindent Moreover, we have that $\mu(S)=\mu(M_L)$ if and only if $W=H^0(F_S)$ and $f\cdot(g-h)=d\cdot(\text{rk}(F_S)\cdot g-h^1(F_S))$. In the same manner we can see that $M_{L}$ is semistable when $h^1(F_S)\leq (\text{rk}(F_S)-1)\cdot g +h$ and $S\subset M_{L}$ is a  subbundle of maximal slope, this happens because $f\leq d$. Hence, from now on we make the assumption 
	\begin{eqnarray}\label{cotah1}
	h^1(F_S)>(\text{rk}(F_S)-1)\cdot g +h.
	\end{eqnarray}
Suppose now that $M_L$ is unstable and $S\subset M_L$ is of slope maximal. By Theorem \ref{intromultiplicationsections}, $W=H^0(F_S)$. We have 
\begin{eqnarray}\label{desigualdades}
 \frac{-f}{h^0(F_S)-\text{rk}(F_S)} =\frac{-f}{w-\text{rk}(F_S)}=\mu(S)\geq\mu(M_L)=\frac{-d}{d-g+h}.
\end{eqnarray}
From  \ref{desigualdades}, we obtain
\begin{eqnarray*}
 f\cdot(g-h)\geq d\cdot(\text{rk}(F_S)\cdot g-h^1(F_S)).
\end{eqnarray*}
Hence  the above inequality is a relationship between tha slopes of $S$ and $M_L$ in terms of the degree and rank of $F_S$.
\item
Suppose that $M_L$ is unstable and $S\subset M_L$ is stable of maximal slope, then $W=H^0(F_S)$ and  we conclude that
\begin{eqnarray*}
\text{rk}(F_S)\cdot g- h^1(F_S)=f-s.
\end{eqnarray*}
If $h^1(F_S)> \text{rk}(F_S)\cdot g-s$, then $f-s< s$ which is equivalent to $\mu(S)> -2,$ but this is impossible by Lemma \ref{butler}. Moreover, if $h^1(F_S)= \text{rk}(F_S)\cdot g-s$, then $\mu(S)=-2$ and $F_S$ is the canonical line bundle. Therefore 
\begin{eqnarray}\label{cotaporarr}
  h^1(F_S)\leq \text{rk}(F_S)\cdot g-s.
\end{eqnarray}
Hence $h^1(F_S)$ is bounded by the genus of the curve, the ranks of $F_S$ and $S$. 
\end{enumerate}
\end{em}
\end{remark}
	
\noindent We denote by $\chi(E)$ the Euler characteristic of a vector bundle $E$ over a curve $C$. In the following lemma we give conditions which implies that the rank of $F_S$ is one.

 \begin{lemma} Let $L\in \text{Pic}^d(C)$ be a globally generated line bundle over $C$. Consider $S\subset M_L$ of maximal slope, then  Conjecture \ref{mis2} is true  if one of the following conditions hold
	\begin{enumerate}
	 \item[(i)] $\chi(F_S)\geq 0.$
	 \item[(ii)]$h^1(F_S)< g.$
	\end{enumerate}\end{lemma}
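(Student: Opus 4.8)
The plan is to reduce the statement to the already-established equivalence in Remark~\ref{3}(a): if we can produce a subbundle $S\in A$ (equivalently, $S\subset M_L$ of maximal slope with $\mu(S)\ge\mu(M_L)$) whose associated bundle $F_S$ in the Butler diagram has rank one, then Conjecture~\ref{mis2} follows. So in both cases (i) and (ii) it suffices to show $\mathrm{rk}(F_S)=1$. Throughout I work under the standing assumption of the section that $M_L$ is strictly semistable or unstable (otherwise the conjecture is trivially true), and I take $S\subset M_L$ of maximal slope; I may further assume $S$ is chosen stable (a maximal-slope stable sub-bundle always exists inside the maximal destabilizing one), so that the inequalities of Remark~\ref{conditionsof} apply, in particular the bound \eqref{cotaporarr}: $h^1(F_S)\le \mathrm{rk}(F_S)\cdot g - s$ when $M_L$ is unstable, together with $\mathrm{rk}(F_S)\cdot g - h^1(F_S) = f - s$ and the relation $f\cdot(g-h)\ge d\cdot(\mathrm{rk}(F_S)\cdot g - h^1(F_S))$ from \eqref{desigualdades}.

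For case (i), $\chi(F_S)\ge 0$: write $\chi(F_S)=f+\mathrm{rk}(F_S)(1-g)\ge 0$, i.e. $f\ge \mathrm{rk}(F_S)(g-1)$. I would combine this with Remark~\ref{desestabilizante}(5), which says $f=\deg(F_S)\le\deg(I)$ where $I=\mathrm{Im}(\alpha)\subseteq L$, so $f\le d$, and with the fact that $\alpha$ factors $F_S\twoheadrightarrow I\hookrightarrow L$. If $\mathrm{rk}(F_S)\ge 2$ then $I$ has rank $1$ and $F_S\to I$ is a generically-surjective map of sheaves of ranks $\ge2\to1$; comparing degrees and using $\chi(F_S)\ge0$ forces $f$ large, which contradicts $f\le d$ once one feeds in the genus constraint. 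Concretely: from $\mathrm{rk}(F_S)\cdot g - h^1(F_S)=f-s$ and $\chi(F_S)\ge0$ one gets $h^1(F_S)\le \mathrm{rk}(F_S)$, and then \eqref{cotaporarr} forces $s\le \mathrm{rk}(F_S)(g-1)$ while $w=\mathrm{rk}(F_S)+s$; plugging into the slope inequality $\mu(S)\ge\mu(M_L)$ and pushing the arithmetic should leave $\mathrm{rk}(F_S)=1$ as the only possibility. Case (ii), $h^1(F_S)<g$: combine with \eqref{cotah1}, the standing assumption $h^1(F_S)>(\mathrm{rk}(F_S)-1)g+h\ge(\mathrm{rk}(F_S)-1)g$; together $ (\mathrm{rk}(F_S)-1)g < h^1(F_S) < g$ immediately forces $\mathrm{rk}(F_S)-1<1$, i.e. $\mathrm{rk}(F_S)=1$. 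That is essentially all of (ii).

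So the outline is: (a) reduce to $\mathrm{rk}(F_S)=1$ via Remark~\ref{3}(a); (b) dispose of the semistable-but-not-stable case by noting that there \eqref{cotah1} would fail unless the destabilizing quotient already has the required form, or simply invoke that we need only the unstable case plus the strictly-semistable case where the relevant $S$ also yields $\mathrm{rk}(F_S)=1$ by the same bounds; (c) in the unstable case apply Remark~\ref{conditionsof}, getting \eqref{cotah1} and \eqref{cotaporarr}; (d) for (ii), the chain $(\mathrm{rk}(F_S)-1)g<h^1(F_S)<g$ gives $\mathrm{rk}(F_S)=1$ at once; (e) for (i), translate $\chi(F_S)\ge0$ into a bound on $h^1(F_S)$ via Riemann--Roch and part~(2) of Theorem~\ref{intromultiplicationsections} ($w=h^0(F_S)$ when $m_W$ is surjective, which holds since $S$ of maximal slope gives $H^0(Q)=0$ by Theorem~\ref{intromultiplicationsections}(3) hence $m_W$ surjective), then feed this into \eqref{desigualdades} to conclude $\mathrm{rk}(F_S)=1$.

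The main obstacle I anticipate is case (i): unlike (ii), it is not a one-line numerical squeeze, and I must be careful about whether the hypothesis $\chi(F_S)\ge0$ is being used in the unstable case (where \eqref{cotah1}--\eqref{cotaporarr} are available) or whether it must also be checked directly when $M_L$ is merely strictly semistable. The delicate point is combining $f\le d$ (from Remark~\ref{desestabilizante}(5)) with $\chi(F_S)\ge0$ and the slope inequality $-f/(w-\mathrm{rk}(F_S))=\mu(S)\ge\mu(M_L)=-d/(d-g+h)$ without circularity: I expect one needs $h=h^1(L)$ to enter, and in the subcase $h=0$ (so $d\ge g+r$, $L$ nonspecial) the arithmetic is cleanest, while $h>0$ may require a separate appeal to Lemma~\ref{butler} to rule out $\mu(S)=-2$ with higher-rank $F_S$. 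I would handle $h=0$ and $h>0$ separately and expect the $h>0$ branch to be where the bookkeeping is heaviest.
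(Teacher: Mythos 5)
Your overall strategy --- reduce to showing $\mathrm{rk}(F_S)=1$ via Remark \ref{3}(a) --- is exactly the paper's, and your case (ii) coincides with the paper's argument verbatim: the hypothesis $h^1(F_S)<g$ together with the standing inequality \eqref{cotah1} gives $(\mathrm{rk}(F_S)-1)\cdot g+h<h^1(F_S)<g$, forcing $\mathrm{rk}(F_S)=1$. That part is complete and correct.

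Case (i), however, is left with a genuine gap, and the one concrete computation you offer is wrong. From $h^0(F_S)=w=\mathrm{rk}(F_S)+s$ (Theorem \ref{intromultiplicationsections}(2)) and $\chi(F_S)=h^0(F_S)-h^1(F_S)\geq 0$ you get $h^1(F_S)\leq \mathrm{rk}(F_S)+s$, not $h^1(F_S)\leq\mathrm{rk}(F_S)$; with the corrected bound, combining with \eqref{cotah1} only yields $s\geq(\mathrm{rk}(F_S)-1)(g-1)+h$, which is not by itself a contradiction. The ingredient you are missing is an upper bound on $d$. The paper first disposes of $d\geq 2g$ (stability of $M_L$ for $d>2g$, Mistretta's Lemma 2.2 for $d=2g$) and of the case $f=d$ (which already forces $\mathrm{rk}(F_S)=1$ by Remark \ref{desestabilizante}(5)), so that one may assume $f<d\leq 2g-1$; only then does $\chi(F_S)\geq 0$, i.e.\ $\mu(F_S)\geq g-1$, collide with $\mu(F_S)=f/\mathrm{rk}(F_S)<d/\mathrm{rk}(F_S)\leq(2g-1)/2$ when $\mathrm{rk}(F_S)\geq 2$. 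Your plan uses only $f\leq d$ with no upper bound on $d$, and indeed the corrected version of your inequality chain also closes only once $d\leq 2g-1$ is known (e.g.\ via $s\leq r-1=d-g+h-1$). You correctly anticipated that (i) was the delicate case, but the phrase ``pushing the arithmetic should leave $\mathrm{rk}(F_S)=1$'' is precisely where the proof is missing: supply the reduction to $f<d\leq 2g-1$ first, and the rest becomes the paper's one-line slope comparison.
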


	\begin{proof} If $\text{deg}(L)>2g$ then $M_L$ is stable and the Conjecture \ref{mis2} follows. In (\cite{mis}, Lemma 2.2), E.C. Mistretta proved the linear stability of $L$ is equivalent to the stability of $M_L$ when $d= 2g$.  By Remark \ref{desestabilizante} we have that $f=\text{rk}(F_S)\leq \text{deg}(L)=d$ and $f=d$ then the rank of $F_S$ is one and this implies that Conjecture \ref{mis2} is true. Hence suppose that $f<d\leq 2g-1$: 
	 \begin{itemize}
	  \item[(i)]$\chi(F_S)\geq 0 \Longleftrightarrow \mu(F_S)\geq g-1$. If $\text{rk}(F_S)\geq 2$, then
	  \begin{eqnarray*}
	   \mu(F_S)= \frac{f}{\text{rk}(F_S)}<\frac{d}{\text{rk}(F_S)}\leq \frac{2g-1}{2}\leq g,
	  \end{eqnarray*} 
	  which is imposible.

	\item[(ii)] From Remark \ref{conditionsof}(1) we have that $(\text{rk}(F_S)-1)\cdot g +h < h^1(F_S)< g$. This implies that $\text{rk}(F_S)< 2-\frac{h}{g}$. Hence $\text{rk}(F_S)=1$ and this completes the proof.
\end{itemize}
\end{proof}
\section{Linear stability and stability on general curves}
 In this section we prove  Conjecture $\ref{mis2}$ when $C$ is a  general curve, also we give a 
 proof on a missing gap on the conditions for the stability of $M_L$ when  $C$ is  general curve (see \cite{but}, Theorem 2.2). 

\begin{lemma}\label{semistabilityML}
	Let $C$ be a general curve of genus $g\geq 2$, and  let $L\in \text{Pic}^d(C)$ be a  globally generated line bundle with $h^0(L)=r+1$. We have that  $M_L$ is semistable, moreover, if there exists a subbundle $S\subset M_L$ such that $\mu(S)=\mu(M_L$)  then 
\begin{enumerate}
 \item $h^1(L)=0$.
 \item  $s=r-1$.
 \item $d=g+r$ and $r$ divides $g$.
\end{enumerate}
\end{lemma}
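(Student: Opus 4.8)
The plan is to exploit the Butler diagram of $L$ by $S$ together with the surjectivity of $m_W$ (Theorem~\ref{intromultiplicationsections}) and the Brill--Noether genericity of $C$. First I would establish semistability of $M_L$: if $S\subset M_L$ were a destabilizing subbundle of maximal slope then by Theorem~\ref{intromultiplicationsections}(3), $H^0(Q)=0$ where $Q=M_L/S$, so $m_W$ is surjective and $W=H^0(F_S)$; combined with $h^0(S)=0$ (so $h^1(S) = sg + s - \deg S$, via Riemann--Roch) and the exact sequence \eqref{exactm0}, one computes $h^1(F_S)$ and then reads off, exactly as in Remark~\ref{conditionsof}(1)--(2), the inequality $f\cdot(g-h)\ge d\cdot(\mathrm{rk}(F_S)g - h^1(F_S))$ and the bound $h^1(F_S)\le \mathrm{rk}(F_S)\cdot g - s$. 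The key extra ingredient that forces semistability is the Brill--Noether bound for the sub-line-bundle picture: since $C$ is general, $F_S^\vee$ cannot have ``too many'' sections, i.e. $h^0(F_S^\vee)=0$ already (Remark~\ref{desestabilizante}(2)) and more importantly $h^1(F_S) = h^0(K_C\otimes F_S^\vee)$ is constrained by the generality of $C$ applied to the vector bundle $F_S$ — concretely, the $\rho$-type inequality $h^0(F_S)\cdot h^1(F_S) \le \mathrm{rk}(F_S)\cdot g$ type estimate (or the precise statement that on a general curve the bundle $M_L$ cannot be destabilized, cf.\ \cite{bra},\cite{schneider}). Chaining these forces $\mu(S)\le\mu(M_L)$, hence $M_L$ is semistable.

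Next, assuming a subbundle $S$ with $\mu(S)=\mu(M_L)$ exists, I would extract the three numerical conclusions. Equality of slopes, together with Remark~\ref{conditionsof}(1), gives simultaneously $W=H^0(F_S)$ and $f\cdot(g-h) = d\cdot(\mathrm{rk}(F_S)g - h^1(F_S))$. The plan for (1), i.e.\ $h^1(L)=0$: suppose $h=h^1(L)>0$; then $M_L^\vee$ is a quotient of sections in a way that makes $F_S$ (equivalently $S=M_{W,F_S}$) fall under the hypotheses of Butler's Lemma~\ref{butler}, giving $\mu(S)\le -2$, hence $\mu(M_L) = -d/(d-g+h)\le -2$, i.e.\ $d\ge 2(d-g+h)$, i.e.\ $d\le 2g-2h\le 2g-2$; one then checks (using the exceptional cases of Lemma~\ref{butler}: hyperelliptic/canonical, which are excluded or degenerate on a general curve of genus $\ge 2$ with $h>0$) that no such configuration with equality of slopes survives, forcing $h=0$. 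With $h^1(L)=0$ we have $\mu(M_L) = -d/(d-g)$, and $d-g = r$ by Riemann--Roch ($h^0(L)=r+1$, so $d = g+r$), which is the first half of (3).

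For (2), $s=r-1$: from $h^1(L)=0$ the bound \eqref{cotaporarr}, $h^1(F_S)\le \mathrm{rk}(F_S)g - s$, together with the equality $f(g-h) = f\cdot g = d(\mathrm{rk}(F_S)g - h^1(F_S))$ and $f\le d$ (Remark~\ref{desestabilizante}(5)), should pin down $h^1(F_S) = \mathrm{rk}(F_S)g - s$ and $f=d$; by Remark~\ref{desestabilizante}(5) this last equality (with $S$ of maximal slope) yields $\mathrm{rk}(F_S)=1$. Then $S$ has rank $w-1 = h^0(F_S)-1$, and since $F_S$ is a line bundle with $\deg F_S = d$ on a general curve with $h^0(L)=r+1$, comparing with $M_L$ of rank $r$ forces $s=r-1$ (equivalently $w = r$). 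Finally, $r\mid g$: with $s=r-1$, $\mathrm{rk}(F_S)=1$, $f=d=g+r$, the slope equality $\mu(S) = -f/(w-1) = -d/(r-1)$ must equal $\mu(M_L) = -d/r$ — wait, more carefully: $\mu(S) = -f/(w - \mathrm{rk}(F_S)) = -d/(r-1)$ is too negative, so instead the correct relation uses $h^1(F_S)$: the Brill--Noether equality $h^0(F_S)h^1(F_S) = $ (something) $g$ for the line bundle $F_S$ on a general curve gives $(s+1)\cdot h^1(F_S) = g + (\text{correction})$, and substituting $h^1(F_S) = g - s$ (the $\mathrm{rk}=1$ case of \eqref{cotaporarr}) produces $s\cdot\frac{g}{r} = $ integer, i.e.\ $r = s+1$ divides $g$. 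I would carry out this last arithmetic step explicitly since it is where the divisibility $r\mid g$ is born.

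The main obstacle I anticipate is the rigorous use of Brill--Noether genericity: the statement ``$M_L$ cannot be destabilized on a general curve'' and the precise constraint it places on $h^1(F_S)$ (equivalently on $h^0$ of the sub-line-bundle $F_S^\vee\otimes K_C$, or on the quotient line bundle $L' = \mathrm{Im}(\alpha)$) need to be deduced from the dimension of the relevant Brill--Noether locus being negative, which requires identifying $F_S$ (or $Q$) with an explicit Brill--Noether-type bundle and invoking \cite{montserrat}, Theorem~2.4 (used already in Remark~\ref{estrictoW}) plus the classical Brill--Noether theorem. Getting the bookkeeping between $d$, $g$, $r$, $s$, $f$, $h^1(F_S)$ consistent — so that all three conclusions fall out of the \emph{single} equality $\mu(S)=\mu(M_L)$ rather than needing extra hypotheses — is the delicate point; everything else is Riemann--Roch, the Butler diagram, and Theorem~\ref{intromultiplicationsections}.
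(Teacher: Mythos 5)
Your proposal has genuine gaps, and the central one is that you never isolate the mechanism the paper actually uses. The paper's proof is short and direct: since $S^{\vee}$ is a quotient of the globally generated bundle $M_L^{\vee}$, it is itself globally generated, so a general $(s+1)$-dimensional subspace $U\subset H^0(S^{\vee})$ gives an exact sequence $0\to S\to U^{\vee}\otimes\mathcal O_C\to \det(S^{\vee})\to 0$; since $h^0(S)=0$ this forces $h^0(\det(S^{\vee}))\geq s+1$, and Brill--Noether generality applied to the \emph{line bundle} $\det(S^{\vee})$ gives $\rho(g,\deg S^{\vee},s+1)\geq 0$, i.e.\ $\mu(S)\leq -1-\frac{g}{s+1}$. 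Comparing with $\mu(M_L)=-1+\frac{h-g}{r}$ yields $\mu(S)-\mu(M_L)\leq g(\frac1r-\frac1{s+1})-\frac hr\leq 0$, and equality forces $h=0$, $s+1=r$ at once; $d=g+r$ is Riemann--Roch and $r\mid g$ follows because $Q=M_L/S$ has rank one, so $\deg Q=\mu(M_L)=-d/r$ must be an integer. Your plan instead routes everything through $F_S$, $m_W$, and Remark~\ref{conditionsof}, and at the decisive moment you invoke ``the precise statement that on a general curve the bundle $M_L$ cannot be destabilized'' --- but that is the conclusion, not an available ingredient. The Brill--Noether input must be applied to a concrete line bundle with a counted number of sections ($\det(S^{\vee})$), and your ``$\rho$-type inequality $h^0(F_S)\cdot h^1(F_S)\le \mathrm{rk}(F_S)\cdot g$'' for the possibly higher-rank bundle $F_S$ is not a theorem you can cite on a general curve.

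There are also concrete false steps in the second half. In part (2) you claim the equality case pins down $f=d$ and then deduce $\mathrm{rk}(F_S)=1$ from Remark~\ref{desestabilizante}(5); but in the equality case one has $f=-\deg S=g+r-1-\frac gr<g+r=d$, so $f=d$ is simply wrong, and Remark~\ref{desestabilizante}(5) applies to a \emph{destabilizing} $S$, whereas here $M_L$ is semistable. (The fact that $\mathrm{rk}(F_S)=1$ in the equality case is the content of Theorem~\ref{generalcomplete}, proved afterwards by a separate argument; the present lemma neither needs it nor can obtain it this cheaply.) Your derivation of $h^1(L)=0$ via Butler's Lemma~\ref{butler} only yields $d\le 2g-2h$, which contradicts nothing, and you leave the exceptional-case analysis unfinished; in the paper $h=0$ drops out of the single inequality \eqref{relacionslopeSM}. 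Finally, your closing paragraph for $r\mid g$ visibly collapses (``wait, more carefully'') into an unspecified ``Brill--Noether equality with correction term''; the actual argument is one line of integrality for $\deg(M_L/S)$. In short: the numerical conclusions do all fall out of the single slope equality, but only after the $\det(S^{\vee})$ bound is in place, and that bound is missing from your proposal.
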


\begin{proof}
 Consider a stable subbundle  $S\subset M_L$ of maximal slope. Since $S^{\vee}$ is a quotient of $M_{L}^{\vee}$ and $M^{\vee}_L$ is a globally generated vector bundle, it follows that $S^{\vee}$ is generated by global sections. Hence for a general subspace $U\in \text{Grass}(s+1, H^0(S^{\vee}))$, there exists a short exact sequence
\begin{equation*}
 \xymatrix{0\ar[r]^{} & S  \ar[r]^{}& U^{\vee}\otimes \mathcal{O}_C \ar[r]^{} & \text{det}(S^{\vee}) \ar[r]^{} &  0,}
\end{equation*}
which induces the exact sequence in cohomology
\begin{equation*}
 \xymatrix{0\ar[r]^{} &  H^0(S) \ar[r]^{}& U^{\vee} \ar[r]^{} &  H^0(\text{det}(S^{\vee})) \ar[r]^{} &  \ldots,}
\end{equation*}
The bundle $S$ is semistable of negative degree, then $h^{0}(S)=0$ and $h^0(\text{det}(S^{\vee}))\geq \text{dim}(U)=s+1$. Since $C$ is a general curve and $\text{det}(S^{\vee})$ is a line bundle of degree  $\text{deg}(S^{\vee})$ with at least  $s+1$ sections, we have that the Brill-Noether number for 
$\text{det}(S^{\vee})$ is
\begin{equation}\label{brillnoethernumber}
\rho(g,\text{deg}(S^{\vee}),s+1)= g-(s+1)(g-\text{deg}(S^{\vee}) + s)\geq 0
\end{equation}
 By \ref{brillnoethernumber} we have 
 \begin{equation*}
 \text{deg}(S^{\vee})\geq \frac{s\cdot(g+s+1)}{s+1},
 \end{equation*}
which is equivalent to
 \begin{equation}
 \mu(S)\leq -1-\frac{g}{s+1}.
 \end{equation}
Set $h=h^1(L)$. By Riemann-Roch Theorem we have 
\begin{equation}
\mu(M_L)=-\frac{d}{r}=-1+\frac{h-g}{r},
\end{equation}
 therefore
 \begin{equation}\label{relacionslopeSM}
 \mu(S)- \mu(M_L)\leq   g\cdot  (\frac{1}{r}-\frac{1}{s+1})-\frac{h}{r}\leq 0,
 \end{equation}
 hence $M_L$ is semistable. Using \ref{relacionslopeSM}, we have that $h=0$ and $s=r-1$ when $\mu(S)=\mu(M_L)$. Applying Riemann-Roch Theorem we have that $d=g+r.$ Finally, $r$ divides $g$ because $\mu(M_L)=\text{deg}(M_L/S)\in \mathbb{Z}$, which completes the proof.
\end{proof}
We are interested in studying the Butler's diagram of $L$ by $S$ when $M_L$ is strictly semistable and $S\subset M_L$ is of maximal slope. We have the following

\begin{theorem}\label{generalcomplete}
	Let $C$ be a general curve of genus $g\geq 2$ and  let $L\in \text{Pic}^d(C)$ be a  globally generated line bundle with $h^0(L)=r+1$. Consider $S\subset M_L$ a subbundle with $\mu(S)=\mu(M_L)$, then there exists a line bundle $F=F_S$ which fits into the following commutative diagram
\begin{equation*} 
				 \xymatrix{0  \ar[r]  & S  \ar[r] \ar@{->}[d]      & H^0(F)\otimes\mathcal{O}_C \ar[r] \ar@{->}[d] & F \ar[r] \ar@{->}[d]  & 0\\
				  0  \ar[r]   &   M_L  \ar[r]         &  H^0(L)\otimes \mathcal{O}_C \ar[r]    &  L   \ar[r]           & 0      .\\  }
				\end{equation*}

				\end{theorem}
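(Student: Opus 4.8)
The plan is to pin down the numerics from Lemma~\ref{semistabilityML}, observe that $W=H^0(F_S)$ comes for free from Theorem~\ref{intromultiplicationsections}, and then prove the only substantive point: that $F_S$ has rank one. First I would reduce to the case where $S$ is stable of rank $r-1$. Since $M_L$ is semistable by Lemma~\ref{semistabilityML}, any subbundle $S$ with $\mu(S)=\mu(M_L)$ is semistable; a saturated stable subsheaf of it has the same slope, hence rank $r-1$ by Lemma~\ref{semistabilityML}, and being of the same rank it equals $S$, so $S$ is stable with $s=r-1$. (If $S=M_L$ we may take $F=L$.) Lemma~\ref{semistabilityML} also gives $h^1(L)=0$, $\text{deg}(L)=g+r$ and $r\mid g$; write $g=rk$, $k\geq 1$. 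In the Butler's diagram of $L$ by $S$ we then have $\text{rk}(F_S)=w-s$ with $r\le w\le h^0(L)=r+1$ by Remark~\ref{desestabilizante}(4), so $\text{rk}(F_S)\in\{1,2\}$; and since $S$ is stable of maximal slope, Theorem~\ref{intromultiplicationsections}(3),(1),(2) give in turn $H^0(Q)=0$, $m_W$ surjective, and $W=H^0(F_S)$, so the top row of the asserted diagram is exactly the top row of the Butler's diagram. Thus the theorem reduces to proving $\text{rk}(F_S)=1$, equivalently $H^0(Q^{\vee})\neq 0$ by Remark~\ref{estrictoW} (valid since $C$ is general), where $Q=M_L/S$ is a line bundle of degree $-(k+1)$.

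Suppose for contradiction that $\text{rk}(F_S)=2$, i.e.\ $W=H^0(F_S)=H^0(L)$. Then the middle column of the Butler's diagram is the identity, so the composite $H^0(L)=W\to H^0(F_S)\xrightarrow{\alpha_*}H^0(L)$ is the identity; hence $\alpha_*$ is onto, and since it factors through $H^0(\text{Im}\,\alpha)$ with $L$ globally generated, $\alpha\colon F_S\to L$ must be surjective, giving $0\to N\to F_S\xrightarrow{\alpha}L\to 0$ with $N$ a line bundle. Taking determinants in the top row gives $\det F_S=\det S^{\vee}$, whence $\text{deg}(F_S)=(r-1)(k+1)$, $\text{deg}(N)=-(k+1)$ and $h^0(N)=0$. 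Moreover $\det S^{\vee}$ computes $\rho\big(g,(r-1)(k+1),r\big)=0$ by the Brill--Noether estimate in the proof of Lemma~\ref{semistabilityML}, and since $C$ is general this forces $h^0(\det F_S)=r$ (else $\rho(g,(r-1)(k+1),r+1)<0$ would be violated). A general, hence nowhere vanishing, section of the globally generated rank-two bundle $F_S$ yields $0\to\mathcal{O}_C\to F_S\to\det F_S\to 0$, and from $h^0(F_S)=r+1=1+h^0(\det F_S)$ the connecting map $H^0(\det F_S)\to H^1(\mathcal{O}_C)$ vanishes, for a nonzero extension class (otherwise $F_S\cong\mathcal{O}_C\oplus\det F_S$ and $h^0(F_S^{\vee})\geq 1$, against Remark~\ref{desestabilizante}(2)). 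By Serre duality the existence of such a nonzero class in $\text{Ext}^1(\det F_S,\mathcal{O}_C)=H^1((\det F_S)^{\vee})$ killing $H^0(\det F_S)$ under cup product is equivalent to the non-surjectivity of the multiplication map $H^0(K_C)\otimes H^0(\det F_S)\to H^0(K_C\otimes\det F_S)$.

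It therefore suffices to prove that this multiplication map is surjective on the general curve $C$. If $\mu(S)=-2$ (i.e.\ $k=1$) I would bypass this and apply Lemma~\ref{butler} directly to $F=F_S$, $V=W$, $S=M_{W,F_S}$: all hypotheses hold ($F_S$ has no trivial summand since $h^0(F_S^{\vee})=0$, $h^1(F_S)\neq 0$, $W$ generates $F_S$, $S$ is stable), so $F_S$ is the hyperelliptic or the canonical bundle, both of rank one, a contradiction. If $\mu(S)<-2$ (i.e.\ $k\geq 2$), the surjectivity of $H^0(K_C)\otimes H^0(\det F_S)\to H^0(K_C\otimes\det F_S)$ for the Brill--Noether line bundle $\det F_S=\det S^{\vee}$ (with $h^0=r$, $h^1=k$) follows from the base-point-free pencil trick when $r=2$ — there $h^0(\det F_S)=2$, $h^0(K_C\otimes(\det F_S)^{-1})=h^1(\det F_S)=k$, and the trick gives $\dim\text{Im}=2g-k=h^0(K_C\otimes\det F_S)$ — and, for $r\geq 3$, from the Brill--Noether--Petri genericity of $C$ (equivalently: a general curve carries no stable bundle of rank $r-1$ with more than $r$ sections when the pertinent Brill--Noether number is negative). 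Either way $\text{rk}(F_S)=2$ cannot occur, so $\text{rk}(F_S)=1$ and the theorem follows. The main obstacle is precisely this last input for $r\geq 3$: controlling a higher-rank multiplication/Petri map on a general curve, which is the complete-case analogue of the hypothesis on $\mu_L^{\gamma}$ in Theorem~\ref{introgonal}.
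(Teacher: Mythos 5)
Your setup coincides with the paper's: Lemma \ref{semistabilityML} gives $h^1(L)=0$, $s=r-1$, $d=g+r$; Theorem \ref{intromultiplicationsections} gives $W=H^0(F_S)$; Remark \ref{desestabilizante}(4) gives $\mathrm{rk}(F_S)\in\{1,2\}$; and your Brill--Noether argument that $h^0(\det F_S)=r$ is exactly the paper's Claim. Where you diverge is in ruling out $\mathrm{rk}(F_S)=2$. The paper observes that $h^0(F_S)=r+1=1+h^0(\det F_S)$ forces $0\to\mathcal{O}_C\to F_S\to\det F_S\to 0$ to be exact on global sections and then quotes Mistretta--Stoppino, Lemma 4.3, to conclude $\mu(S)<\mu(M_L)$ directly, a contradiction. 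You instead translate the same exactness into the vanishing of the cup product of a nonzero extension class against $H^0(\det F_S)$, and reduce to the surjectivity of $m\colon H^0(K_C)\otimes H^0(\det F_S)\to H^0(K_C\otimes\det F_S)$.

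That last reduction is where the gap is. For $r=2$ your base-point-free pencil trick computation does close the case (though the paper's route for $r=2$ is shorter: $W^\vee\subseteq H^0(S^\vee)$ and $h^0(S^\vee)=h^0(\det S^\vee)=2$ already forces $w=2$, hence $\mathrm{rk}(F_S)=1$). But for $r\geq 3$ you assert that the surjectivity of $m$ follows from ``Brill--Noether--Petri genericity,'' and it does not: the Gieseker--Petri theorem is the \emph{injectivity} of $H^0(A)\otimes H^0(K_C\otimes A^{-1})\to H^0(K_C)$, which is a different map, and your parenthetical reformulation (non-existence of certain stable bundles with too many sections) is neither proved nor obviously equivalent to the surjectivity you need. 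Surjectivity of $H^0(K_C)\otimes H^0(A)\to H^0(K_C\otimes A)$ is a maximal-rank statement; such statements are genuinely hard, are usually proved only for \emph{general} line bundles of given degree and $h^0$, and here $A=\det S^{\vee}$ is a specific line bundle attached to $S$ (with $\rho=0$, so there are finitely many candidates, none of which you may assume is ``general'' for the multiplication map). You flag this yourself as ``the main obstacle,'' and it is: without an actual proof of this surjectivity the argument for $r\geq 3$ is incomplete. The fix is to replace this step by the paper's appeal to Mistretta--Stoppino's Lemma 4.3, which converts the global-section exactness of $0\to\mathcal{O}_C\to F_S\to\det F_S\to 0$ into the slope inequality without any multiplication-map input.
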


\begin{proof} Consider the Butler's diagram (\ref{butlerdiagram}) of $L$ by $S$ where $S\subset M_L$ is a subbundle of maximal slope such that $\mu(S)=\mu(M_L)$. The fact that $W=H^0(F)$ follows from the surjectivity of $m_W:W^{\vee}\otimes H^{0}(K_C)\rightarrow H^0(S^{\vee}\otimes K_C)$ (see Theorem \ref{intromultiplicationsections}).  From Lemma \ref{semistabilityML} we have that $h=h^1(L)=0$ and $s=r-1$ when $\mu(S)=\mu(M_L)$. Since $W$ is a subspace of $H^0(L)$ and the dimension of $W$ is greater than the rank of $S$, it follows that  the rank of $F_S$ is $1$ or $2$. We will prove that the rank of $F_S$ is 1.
 
 \vspace{.2cm}
 
\noindent{\it{Claim.}} $h^0(\text{det}(F_S))=r.$ 
\vskip1mm
\noindent{\it{Proof of Claim.}} First, note that $\text{deg}(F)=g+r-1-\frac{g}{r}$. This follows from the fact that $\mu(S)=\mu(M_L)$ and $\text{deg}(F_S)=-\text{deg}(S)$. Note also that
 \begin{eqnarray*}
  h^0(\text{det}(F_S))=h^0(\text{det}(S^{\vee}))\geq s+1=r.
 \end{eqnarray*}
Suppose that $h^0(\text{det}(F_S))\geq r+1$. Since $C$ is a general curve and $\text{det}(F_S)$ is a line bundle of degree $\text{deg}(F_S)$ with at least $r+1$ sections, we conclude that the corresponding Brill-Noether number $\rho$ for the line bundle  $\text{det}(F_S)$ is nonnegative, but $\rho=g-(r+1)\cdot (r-\text{deg}(F)+g)=g-(r+1)\cdot (g/r+1) <0$, which contradicts that $C$ is general. This proves the claim. 
\vskip2mm

\noindent Now, we need to distinguish two cases: $r=2$ and $r\neq 2$.

\vspace{.2cm}
 
\noindent{\it Case 1.} Consider the case $r=2$. Therefore $d=g+2$ and $h=0$. Since $M_L$ is strictly semistable, there exists a unique line bundle $S\subset M_L$ of maximal degree 
$-\frac{g+2}{2}$. By the claim, $h^0(S^{\vee})=2$.  We recall that $W^{\vee}:=\text{Im}(H^0(L)^{\vee}\stackrel{\phi}{\rightarrow}H^0(S^{\vee}))$  and generates $S^{\vee}$(see diagram  \ref{butlerdiagram}). We see that $\phi$ is surjective, $F_S=S^{\vee}$ is a line bundle and $W=H^0(S^{\vee})$ has dimension $2$. Hence the Butler's diagram of $L$ by $S$ is giving by
\begin{equation*} 
				 \xymatrix{0  \ar[r]  & S  \ar[r] \ar@{->}[d]      & H^0(S^{\vee})\otimes\mathcal{O}_C \ar[r] \ar@{->}[d] & S^{\vee} \ar[r] \ar@{->}[d]  & 0\\
				  0  \ar[r]   &   M_L  \ar[r]         &  H^0(L)\otimes \mathcal{O}_C \ar[r]    &  L   \ar[r]           & 0      .\\  }
				\end{equation*}
				
				which complete the case $r=2$.
				
				\vspace{.1cm}
				
\noindent{\it Case 2.}  $r\neq 2$. By hypothesis the gonality of $C$ is $\gamma\geq \frac{g+2}{2}$, which implies that 
\begin{equation*}
 d=g+r < \frac{g+2}{2}\cdot r\leq \gamma \cdot r.
\end{equation*}
Suppose that $\text{rk}(F_S)=2$. By Theorem \ref{intromultiplicationsections} and the Butler's diagram we have that  $h^0(F_S)=\text{dim}(W)=r+1$. Since $h^0(\text{det}(F_S))=r$, it follows that  the short exact sequence 
\begin{equation*}
 \xymatrix{0\ar[r]^{} & \mathcal{O}_C   \ar[r]^{}& F_S \ar[r]^{} & \text{det}(F_S) \ar[r]^{} &  0,}
\end{equation*}
is exact on global sections. This implies that $\mu(S)<\mu(M_L)$ (see \cite{mistretastopino}, Lemma 4.3), which is impossible.  Thus, $F=F_S$ is a line bundle. This completes the proof.
\end{proof}

\noindent We recall that the advantage in using that the rank of $F=F_S$ is one lies in the fact  that linearly stable is equivalent to stability of $M_L$. It is worth pointing out that the theorem above gives as a corollary  the stability conditions for $M_L$ stated by Butler:

\begin{corollary}\label{equivalenciageneral} Under the hypothesis of Theorem \ref{generalcomplete} we have:
\begin{enumerate}
	\item  The stability of $M_L$ is equivalent the linear stability of $(L, H^0(L))$.
	
	\vspace{.1cm}
	
	\item $M_L$ fails to be stable if and only if all the following three condition hold
	
	\vspace{.1cm}
	
	 \begin{enumerate}
	  \item $h^1(L)=0$.
	  
	  \vspace{.1cm}
	  
	  \item $d=g+r$ and $r$ divides  $g$.
	  
	  \vspace{.1cm}
	  
	  \item There is an effective divisor $Z$ with $h^0(L(-Z))=h^0(L)-1$ and $\text{deg}(Z)=1+\frac{g}{r}$.
	 \end{enumerate}
	 \end{enumerate}
	\end{corollary}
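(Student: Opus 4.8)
The strategy is to read Corollary~\ref{equivalenciageneral} off Theorem~\ref{generalcomplete} and Lemma~\ref{semistabilityML}: the content of Theorem~\ref{generalcomplete} is exactly that every maximal--slope subbundle $S\subset M_L$ with $\mu(S)=\mu(M_L)$ comes equipped with a \emph{rank one} bundle $F=F_S$, and a rank one $F_S$ is precisely what turns a destabilizing subbundle of $M_L$ into a destabilizing linear subseries of $(L,H^0(L))$. I may assume $r\geq 2$ throughout: if $r\leq 1$ then $M_L$ is a line bundle (Remark~\ref{3}(b)), hence stable, and conditions (a)--(c) cannot be used to destabilize anything, so both assertions are vacuous there.

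\emph{Part (1).} One implication is the standard one recalled in the introduction: stability (resp. semistability) of $M_L$ forces linear stability (resp. linear semistability) of $(L,H^0(L))$, since any generated subseries $(L',W)$ yields a subsheaf $M_{W,L'}\subseteq M_L$ of slope $-\deg(L')/(w-1)$. For the converse I would argue contrapositively. If $M_L$ is not stable, then by Lemma~\ref{semistabilityML} it is strictly semistable, so a Jordan--H\"older step produces a stable $S\subset M_L$ with $\mu(S)=\mu(M_L)$. Theorem~\ref{generalcomplete} gives the Butler diagram of $L$ by $S$ with $F=F_S$ a line bundle and, by Theorem~\ref{intromultiplicationsections}(2), $W=H^0(F)$. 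The map $\alpha\colon F\to L$ is nonzero (Remark~\ref{desestabilizante}(3)); since $\operatorname{rk}(F_S)=1$ and $S$ destabilizes $M_L$, Remark~\ref{desestabilizante}(5) gives $\deg(F)=\deg(I)$ for $I:=\operatorname{Im}(\alpha)$, so $\alpha$ is an isomorphism onto the invertible subsheaf $I\subseteq L$. Then $W\otimes\mathcal O_C\twoheadrightarrow F\cong I$ shows that $I$ is the line bundle generated by $W$ and that $S=M_{W,I}$, so $(I,W)$ is a generated subseries of $(L,H^0(L))$ with $\deg(I)/(w-1)=-\mu(S)=\deg(L)/r$; hence $(L,H^0(L))$ is not linearly stable. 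On a general curve $M_L$, and therefore $(L,H^0(L))$, is always semistable (Lemma~\ref{semistabilityML}), so the semistable equivalence holds trivially.

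\emph{Part (2).} For the ``only if'' direction, assume $M_L$ is not stable and take $S$ as above. Lemma~\ref{semistabilityML} already gives $h^1(L)=0$, $s=r-1$, $d=g+r$ and $r\mid g$, i.e. (a) and (b). With $F=F_S$ the line bundle of Theorem~\ref{generalcomplete}, the rank count in $0\to S\to W\otimes\mathcal O_C\to F\to 0$ forces $w=s+1=r$, so $h^0(F)=r=h^0(L)-1$. Writing $I\cong F$ as $L(-Z)$ with $Z$ effective, the computation $\deg(F)=-\deg(S)=g+r-1-g/r$ yields $\deg(Z)=d-\deg(F)=1+g/r$, and $h^0(L(-Z))=h^0(I)=h^0(F)=h^0(L)-1$, which is (c). For the ``if'' direction, assume (a)--(c) and set $W:=H^0(L(-Z))\subseteq H^0(L)$, of dimension $h^0(L)-1=r$. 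Every section in $W$ vanishes on $Z$, so the line bundle $L'$ generated by $W$ satisfies $L'\subseteq L(-Z)$, whence $\deg(L')\leq d-\deg(Z)=g+r-1-g/r=(r-1)(g+r)/r$ and $\deg(L')/(w-1)\leq (g+r)/r=\deg(L)/r$. Thus $(L,H^0(L))$ is not linearly stable, and by part (1) $M_L$ is not stable.

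The step I expect to be the main obstacle is the identification $F_S\cong I\subseteq L$, that is, realizing the rank one bundle $F_S$ as the sub--line bundle of $L$ generated by $W$. This is what upgrades the abstract destabilizing subbundle $S\subset M_L$ to a genuine \emph{linear} destabilization $M_{W,L'}$, and it rests on combining $\alpha\neq 0$, the degree equality $\deg(F_S)=\deg(I)$ of Remark~\ref{desestabilizante}(5) (valid because $S$ is destabilizing of maximal slope and $\operatorname{rk}(F_S)=1$), and $W=H^0(F_S)$ from Theorem~\ref{intromultiplicationsections}(2), itself a consequence of the surjectivity of $m_W$. Once this identification is secured, conditions (a)--(c) are forced by the slope arithmetic together with the Brill--Noether count inside Lemma~\ref{semistabilityML}, and the converse implication follows from the elementary comparison $L'\subseteq L(-Z)$ with no further input.
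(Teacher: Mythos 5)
Your proof is correct and follows essentially the same route as the paper: Lemma~\ref{semistabilityML} supplies (a) and (b), Theorem~\ref{generalcomplete} gives $\operatorname{rk}(F_S)=1$, and $W=H^0(F_S)$ (via surjectivity of $m_W$) yields condition (c). The only cosmetic difference is in the converse of (2), where you destabilize $(L,H^0(L))$ linearly and invoke part (1), while the paper directly exhibits $M_{L(-Z)}\subset M_L$ of equal slope; your write-up also makes explicit the identification of $F_S$ with the sub-line bundle of $L$ generated by $W$, which the paper leaves implicit.
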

\begin{proof} If $M_L$ is stable, then $(L, H^0(C,L))$ is linearly stable. Suppose now that $M_L$ is strictly semistable and let $S\subset M_L$ a subbundle such that $\mu(S)=\mu(M_L)$. By Theorem \ref{generalcomplete} the rank of $F_S$ is one and this implies that $(L,H^0(L))$ is strictly linearly semistable. This proves (1).
	
	\vspace{.2cm}
	
	\item We prove (2). ($\Leftarrow )$ Note that the conditions $(a)$, $(b)$ and $(c)$ implies that $F_S=L(-Z)$ in the Butler diagram of $L$ by $S:=M_{L(-Z)}$ and  $\mu(M_{L(-Z)})=\mu(M_L)$. Hence $M_L$ is semistable but not stable. 

\vskip1mm
	
\noindent ($\Rightarrow)$ Now, suppose that $M_L$ is strictly semistable. From Lemma $\ref{semistabilityML}$ there exists a subbundle $S\subset M_L$ such that the conditions $(a)$ and $(b)$ are satisfied. Since $F_S$ is a line bundle and $\alpha :F_S\rightarrow L$ is not a zero map, we have that there exists an effective divisor $Z$ such that $F=L(-Z)$ (see Lemma \ref{generalcomplete}). From the fact that $\text{deg}(F)=-\text{deg}(S)$ and $\mu(S)=\mu(M_L)$ it follows that $\text{deg}(Z)=1+g/r$. Note that $Q:=M_L/S=\mathcal{O}(-Z)$, then $h^0(Q)=0$, so $m_W$ is surjective by Theorem 1.1, this implies that $h^0(L(-Z))=h^0(F)=w=h^0(L)-1$, this gives condition $(c)$.\end{proof}

	\begin{example}
	 \begin{em}
	 Let $C$ be a general curve of genus even and let $D$ be an effective divisor such that $|\mathcal O_C(D)|=g_{\frac{g+2}{2}}^1$ is free of base points. Take $L=\mathcal{O}_C(2D)$, therefore $L$ satisfies the properties (a), (b) and the divisor $Z$ is precisely $D$, this gives the condition $(c)$. Hence $M_L$ is semistable but not stable.
	 \end{em}

	\end{example}

\section{$\gamma$-Gonal curves}

In this section we consider a curve $C$ of gonality $\gamma$. We denote by $B$ a line bundle (or $\gamma$-gonal divisor) that computes the gonality, i.e 
$\text{deg}(B)=\gamma$ and $h^0(C,B)=2$. The relationship between the gonality  and the index Clifford  of a curve  is given by the following inequality 
\begin{eqnarray*}
 \gamma -3\leq \text{Cliff}(C)\leq \gamma-2.
\end{eqnarray*}

\noindent In (\cite{mistretastopino}, Theorem 1.1), the authors proved the equivalence between the stability of $M_L$ and the linear stability of $(L,H^0(L))$ when 
$ d-2\cdot r\leq\text{Cliff}(C)$. This condition implies that 
	 \begin{eqnarray*}
	  d\leq 2\cdot r+\text{Cliff}(C) \leq  2\cdot r +\gamma-2\leq \gamma\cdot r.
	 \end{eqnarray*}

\noindent So, we are interested  in the case $d>\gamma\cdot  r$. We have the following criterium  we solve a particular case of the  Conjecture $\ref{mis2}$.
	 
	 \begin{theorem}\label{gonalinyectivo}Let $C$ be a curve of gonality $\gamma$, and let $L\in \text{Pic}^d(C)$ be a globally generated line bundle with $h^0(C,L)=r+1$. Suppose that $d > \gamma \cdot  r$.
	\noindent If the multiplication map 
 \begin{equation*}
  \xymatrix{ \mu_{L}^{\gamma}:=H^0(L)\otimes H^0(C,B) \ar[r]^{}& H^0(L\otimes B)}
 \end{equation*}

 is not injective, then $(L, H^0(L))$ is not linearly semistable.
	 \end{theorem}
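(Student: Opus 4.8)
\emph{Proof proposal.} The plan is to convert the failure of injectivity of $\mu_L^{\gamma}$ into the existence of a $2$-dimensional subspace of $H^0(L)$ generating a line bundle of degree $\gamma$, and then to observe that under the hypothesis $d>\gamma\cdot r$ such a subspace violates the defining inequality of linear semistability. The first step is to identify $\ker\mu_L^{\gamma}$. Since $|B|=g^1_{\gamma}$ computes the gonality, it is base-point-free (a base point would produce a pencil of degree $\gamma-1$), so $H^0(C,B)$ is a base-point-free pencil and the base-point-free pencil trick gives $M_{H^0(B),B}\cong B^{\vee}$, hence a short exact sequence $0\to B^{\vee}\to H^0(C,B)\otimes\mathcal O_C\to B\to 0$. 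Twisting by $L$ and taking global sections yields
\[
0\longrightarrow H^0(C,L\otimes B^{\vee})\longrightarrow H^0(C,L)\otimes H^0(C,B)\xrightarrow{\ \mu_L^{\gamma}\ } H^0(C,L\otimes B),
\]
so $\ker\mu_L^{\gamma}\cong H^0(C,L\otimes B^{\vee})$. Therefore non-injectivity of $\mu_L^{\gamma}$ means exactly that there is a nonzero section $\tau\in H^0(C,L\otimes B^{\vee})$; writing $E$ for its zero divisor, $E$ is effective of degree $d-\gamma$ and $L\otimes B^{\vee}\cong\mathcal O_C(E)$.

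Next I would build the destabilizing subseries directly from $\tau$. Multiplication by $\tau$ is an injective sheaf morphism $B\hookrightarrow L$, so $W:=\tau\cdot H^0(C,B)\subseteq H^0(C,L)$ has dimension $w=2$. Every section of $W$ has the form $\tau s$ with zero divisor $E+\operatorname{div}(s)$; since $|B|$ is base-point-free, the greatest common divisor of the divisors $\operatorname{div}(s)$, $s\in H^0(C,B)$, is zero, so the base divisor of the linear series $(L,W)$ is exactly $E$. Hence the line bundle $L'$ generated by $W$ equals $L(-E)\cong B$, of degree $\gamma$.

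Finally I would conclude: we have exhibited $W\subset H^0(C,L)$ with $\dim W=2$ generating a line bundle $L'$ with $\deg L'=\gamma$, so that $\tfrac{\deg L'}{w-1}=\gamma<\tfrac{d}{r}=\tfrac{\deg L}{r}$, the strict inequality being precisely the hypothesis $d>\gamma\cdot r$. This contradicts the definition of linear semistability for $(L,H^0(L))$, which proves the theorem. The point that must be handled with care — and the only place where any genuine geometric input enters — is that we really need a genuine pencil subspace $W$ (equivalently, a destabilizing subbundle of $M_L$ of the form $M_{W,L'}$), not merely a destabilizing subsheaf of $M_L$; this is exactly what forces us to use the base-point-freeness of $|B|$ in order to pin $\deg L'$ down to $\gamma$ rather than to a weaker bound.
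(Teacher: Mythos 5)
Your proposal is correct and follows essentially the same route as the paper: both use the base-point-free pencil trick to identify $\ker\mu_L^{\gamma}$ with $H^0(C,L\otimes B^{\vee})$, extract from a nonzero section an inclusion $B\hookrightarrow L$ giving a two-dimensional subspace $W\subset H^0(C,L)$ that generates the degree-$\gamma$ line bundle $B$, and conclude that $d>\gamma\cdot r$ violates the linear semistability inequality (the paper phrases this last step equivalently as $\mu(B^{\vee})=-\gamma>-d/r=\mu(M_L)$ for the subbundle $B^{\vee}=M_{W,B}\subset M_L$). Your explicit verification, via base-point-freeness of $|B|$, that the line bundle generated by $W$ is exactly $B$ is a welcome elaboration of a point the paper leaves terse, but it is not a different argument.
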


	 \begin{proof}
	 Twisting the short sequence (\ref{dualspam}) by $B$, we get
\begin{equation*}
 \xymatrix{0\ar[r]^{} & M_L\otimes B   \ar[r]^{}&  H^0(L)\otimes B \ar[r]^{} & L\otimes B \ar[r]^{} &  0.}
\end{equation*}
This sequence induce the following sequence in cohomology
\begin{equation*}
 \xymatrix{0\ar[r]^{} & H^{0}(M_L\otimes B)   \ar[r]^{}& H^0(L)\otimes H^0(B) \ar[r]^{\mu_{L}^{\gamma}} & H^{0}(L\otimes B) \ar[r]^{} &  \ldots}
\end{equation*}

Note that the multiplication map $\mu^{\gamma}_{L}$ is not injective if and only if $h^0(M_L\otimes B)\neq 0$.  By the base point free pencil trick, $\mu^{\gamma}_L$ is not injective  if and only if $H^0(B^{\vee}\otimes L)\neq 0$. If $h^0(B^{\vee}\otimes L)>0$, therefore $B\subset L$ and we have the following diagram 
\begin{equation} \label{butlerdiagramgonal} 
				 \xymatrix{0  \ar[r]  &  B^{\vee}   \ar[r] \ar@{->}[d]      & H^0(B)\otimes\mathcal{O}_C \ar[r] \ar@{->}[d] & B \ar[r] \ar@{->}[d]  & 0\\
				  0  \ar[r]   &   M_L  \ar[r]         &  H^0(L)\otimes \mathcal{O}_C \ar[r]    &  L   \ar[r]           & 0      .\\  }
				\end{equation}
				
In particular, we get 
\begin{eqnarray*}
	  \mu(B^{\vee})=-\gamma > -\frac{d}{r}=\mu(M_L)
	 \end{eqnarray*}
	 It follows that $(L,H^0(L))$ is not linearly semistable, which completes the proof.
\end{proof}

\begin{remark}
 \begin{em}
 If we take $S=B^{\vee}$ in the Butler's diagram, we obtain 
\begin{equation*}
 \xymatrix{ W^{\vee}:=\text{Im}(H^0(L)^{\vee}\ar[r]^{\phi} & H^0(C,B)).}
\end{equation*}

Since $B$ is a generated line bundle, we see that $\phi$ is surjective and $W$ has dimension 2. The Butler's diagram of $L$ by $S$ is the diagram \ref{butlerdiagramgonal}.
 \end{em}
\end{remark}

\noindent The principal significance of the above Theorem is that $M_L$ is not semistable. The following lemma gives a condition for  which the multiplication map $\mu_{L}^{\gamma}$ is not injective.

	 \begin{lemma}\label{seccionesconjecture}
	   Under the hypothesis of Theorem \ref{gonalinyectivo}. If $d>\gamma+g-1-h^1(L)$, then the Conjecture \ref{mis2} follows.
	  \end{lemma}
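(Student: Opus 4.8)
The plan is to reduce the statement to Theorem \ref{gonalinyectivo}: it suffices to show that under the hypothesis $d>\gamma+g-1-h^1(L)$ the multiplication map $\mu_L^{\gamma}$ fails to be injective, since then that theorem already gives that $(L,H^0(L))$ is not linearly semistable, and the diagram (\ref{butlerdiagramgonal}) simultaneously produces a subbundle $B^{\vee}\subset M_L$ with $\mu(B^{\vee})=-\gamma>-\tfrac{d}{r}=\mu(M_L)$ (here we use $d>\gamma r$), so $M_L$ is not semistable either; with both sides of the asserted equivalence false, Conjecture \ref{mis2} holds vacuously for this $L$. As in the proof of Theorem \ref{gonalinyectivo}, the base point free pencil trick (the $\gamma$-gonal pencil $|B|$ being automatically base point free) identifies the kernel of $\mu_L^{\gamma}$ with $H^0(C,L\otimes B^{\vee})$, so the whole lemma comes down to proving $h^0(C,L\otimes B^{\vee})>0$.

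First I would apply Riemann--Roch to $L\otimes B^{\vee}$, which has degree $d-\gamma$, obtaining
\begin{equation*}
h^0(C,L\otimes B^{\vee})=d-\gamma+1-g+h^1(C,L\otimes B^{\vee}).
\end{equation*}
The key step is to bound $h^1(C,L\otimes B^{\vee})$ from below by $h^1(L)$. By Serre duality $h^1(C,L\otimes B^{\vee})=h^0(C,K_C\otimes L^{\vee}\otimes B)$, and since $B$ is effective (it has degree $\gamma>0$ and $h^0(B)=2$) there is an inclusion $K_C\otimes L^{\vee}\hookrightarrow K_C\otimes L^{\vee}\otimes B$, whence
\begin{equation*}
h^1(C,L\otimes B^{\vee})=h^0(C,K_C\otimes L^{\vee}\otimes B)\geq h^0(C,K_C\otimes L^{\vee})=h^1(L).
\end{equation*}
Combining the two displays yields $h^0(C,L\otimes B^{\vee})\geq d-\gamma+1-g+h^1(L)$, and the hypothesis $d>\gamma+g-1-h^1(L)$ makes the right-hand side strictly positive. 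Hence $H^0(C,L\otimes B^{\vee})\neq 0$, so $\mu_L^{\gamma}$ is not injective, and the reduction in the first paragraph closes the argument.

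The only genuinely delicate point is the lower bound on $h^1(L\otimes B^{\vee})$: everything rests on using the effectivity of the gonality divisor to transfer sections of $K_C\otimes L^{\vee}$ into sections of $K_C\otimes L^{\vee}\otimes B$. I would also double-check the degenerate cases — when $h^1(L)=0$ the estimate reduces to the naive Riemann--Roch bound $d>\gamma+g-1$, and when $K_C\otimes L^{\vee}$ has no sections at all one must make sure no strict inequality is lost — but in each case the inequality above still holds, so the conclusion is unaffected.
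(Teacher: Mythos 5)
Your argument is correct, and it follows the same overall strategy as the paper: reduce to showing that $\mu_L^{\gamma}$ is not injective and then invoke Theorem \ref{gonalinyectivo} (together with the subbundle $B^{\vee}\subset M_L$ of slope $-\gamma>\mu(M_L)$ coming from diagram (\ref{butlerdiagramgonal})) to conclude that both linear semistability of $(L,H^0(L))$ and semistability of $M_L$ fail, so the asserted equivalence holds. Where you diverge is in how non-injectivity is established. The paper does a raw dimension count, showing $h^0(B)\cdot h^0(L)>h^0(L\otimes B)$; the inequality it needs for this is $h^1(L)\geq h^1(L\otimes B)$, proved by restricting to a gonal divisor $D\in|B|$. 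You instead compute the kernel itself: by the base point free pencil trick $\ker\mu_L^{\gamma}=H^0(L\otimes B^{\vee})$, and Riemann--Roch plus the bound $h^1(L\otimes B^{\vee})=h^0(K_C\otimes L^{\vee}\otimes B)\geq h^0(K_C\otimes L^{\vee})=h^1(L)$ (from effectivity of $B$) gives $h^0(L\otimes B^{\vee})\geq d-\gamma+1-g+h^1(L)>0$. The two computations are really the two ends of the same base-point-free-pencil-trick long exact sequence
\begin{equation*}
0\to H^0(L\otimes B^{\vee})\to H^0(B)\otimes H^0(L)\to H^0(L\otimes B)\to H^1(L\otimes B^{\vee})\to\cdots,
\end{equation*}
so the arguments are equivalent in substance; yours has the small advantage of producing an explicit lower bound on the dimension of the kernel, while the paper's avoids Serre duality. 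Your closing remarks about the degenerate cases ($h^1(L)=0$, or $K_C\otimes L^{\vee}$ without sections) are fine: the inequalities are non-strict there and nothing is lost.
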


	 \begin{proof}
	From Theorem \ref{gonalinyectivo} we only need to show that, under the above hypothesis, the map $\mu_L^{\gamma}$ is not injective.

\noindent First note that $h^1(L)\geq h^1(L\otimes B)$. To see this, let $D\in |B|=g_{\gamma}^1$ be a gonal-divisor. The exact sequence
\begin{equation*}
 \xymatrix{0\ar[r]^{} & L  \ar[r]^{}& L\otimes \mathcal O_C(D) \ar[r]^{} &  \mathcal{O}_{D}\otimes L\otimes \mathcal O_C(D)\ \ar[r]^{} &  \ldots,}
\end{equation*}
induces an exact sequence
\begin{equation*}
 \xymatrix{ \cdots \ar[r]^{} & H^0( \mathcal{O}_{D}\otimes L\otimes B) \ar[r]^{}&  H^1(L) \ar[r]^{} & H^1(L\otimes B)  \ar[r]^{} & 0,}
\end{equation*}

therefore $h^1(L)\geq h^1(L\otimes B)$.
 
\noindent Counting dimensions and using that $h^1(L)\geq h^1(L\otimes B)$ and applying the hypothesis on the degree $d$, we see that the map $ \mu_{L}^{\gamma}$ can not be injective:
\begin{eqnarray*}
h^0(C, B)\cdot h^0(C, L)&=& 2\cdot h^0(L)\\
                 &=& 2d+2-2g+2h^1(L)\\
                 &\geq& 2d+h^1(L)+2-2g +h^1(C, L\otimes B)\\
                 &>& d+\gamma+1-g + h^1C, (L\otimes B)\\
                 &=&h^0(C, L\otimes B).
\end{eqnarray*}
This proves the Lemma.
	 \end{proof}

	 \begin{example}
	  \begin{em}
	 Let $C$ be a curve of gonality $\gamma$ and let $L$ be a generated line bundle with $h^1(L)=0$. The conditions $d>\gamma \cdot (h^0(L)-1)$ and   $d+h^1(L)>\gamma+g-1$ hold if  $g>(\gamma -1)^2$.
	 \end{em}
	 \end{example}

\begin{example}
\begin{em} Let $C$ be a non-hyperelliptic curve and $L\in \text{Pic}^{2g-1}(C)$ be a globally generated line bundle, then $\text{deg}(L)-2\cdot (h^0(L)-1)\leq \text{Cliff}(C)$ and Conjecture \ref{mis2} is satisfied (see \cite{mistretastopino}, Theorem 1.1). 

\vspace{.2cm}
	  
\noindent Let $C$ be an hyperelliptic curve of genus $g$. If $g=2$ then  $h^0(L)=2$ and by Remark \ref{3} the Conjecture \ref{mis2} follows. If  $g\geq 3$, then $h^0(L)=g\geq 3$ and  by Lemma \ref{seccionesconjecture} the Conjecture \ref{mis2} is satisfied.
	 \end{em}

	\end{example}

	 \begin{corollary}
	  The conjecture \ref{mis2} is true for hyperelliptic curves.
	 \end{corollary}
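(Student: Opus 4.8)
The statement to be proved is the special case of Conjecture \ref{mis2} in which $C$ is hyperelliptic, so throughout $\gamma=2$ and $\text{Cliff}(C)=0$; recall also that (semi)stability of $M_L$ always implies linear (semi)stability of $(L,H^0(L))$, so only the converse implication is at issue. The plan is to split into three cases according to $r:=h^0(L)-1$ and the position of $d$ relative to $2r=\gamma r$, and in each case to invoke an equivalence already available earlier in the paper or in the literature. No genuinely new argument should be needed; the work is to check that the three cases are exhaustive and that the hypotheses of the cited results are met.

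First I would dispose of $r\leq 1$: if $h^0(L)\leq 1$ and $L$ is globally generated then $L=\mathcal{O}_C$ and the statement is vacuous, while if $h^0(L)=2$ then $M_L$ is a line bundle and Conjecture \ref{mis2} holds by Remark \ref{3}(b). Assume henceforth $r\geq 2$. If $d\leq 2r$, then $d-2r\leq 0=\text{Cliff}(C)$, and the desired equivalence is precisely (\cite{mistretastopino}, Theorem 1.1). If instead $d>2r=\gamma r$, then the hypotheses of Theorem \ref{gonalinyectivo} hold; by the Riemann-Roch theorem $h^1(L)=r+g-d$, so the remaining hypothesis $d>\gamma+g-1-h^1(L)$ of Lemma \ref{seccionesconjecture} simplifies to $d>d+1-r$, i.e. to $r>1$, which holds. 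Hence Lemma \ref{seccionesconjecture} yields Conjecture \ref{mis2} for such $L$. Since every globally generated line bundle $L$ on $C$ falls into one of these three cases, the corollary follows.

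The two verifications just used are the only points requiring care, and they are the closest thing to an obstacle here. First, the boundary value $d=2r$ must be absorbed into the range $d\leq 2r+\text{Cliff}(C)$ of (\cite{mistretastopino}, Theorem 1.1), which works exactly because a hyperelliptic curve has Clifford index $0$. Second, in the complementary range $d>2r$ the extra degree hypothesis of Lemma \ref{seccionesconjecture} must be shown automatic for $r\geq 2$, which is exactly what substituting $h^1(L)=r+g-d$ from Riemann-Roch accomplishes. Alternatively, one could treat the whole range $d>2r$ uniformly by noting that there the multiplication map $\mu_{L}^{\gamma}$ is never injective, so that $B^{\vee}$ (as in diagram \ref{butlerdiagramgonal}) simultaneously destabilizes $M_L$ and $(L,H^0(L))$; but peeling off the range of (\cite{mistretastopino}, Theorem 1.1) keeps the case analysis shortest.
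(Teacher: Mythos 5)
Your proof is correct and follows essentially the same route as the paper: the range $d\leq 2r=2r+\mathrm{Cliff}(C)$ is handled by (\cite{mistretastopino}, Theorem 1.1), the range $d>2r$ by Lemma \ref{seccionesconjecture}, and the leftover case $h^0(L)=2$ by Remark \ref{3}(b). Your Riemann--Roch check that the hypothesis $d>\gamma+g-1-h^1(L)$ of Lemma \ref{seccionesconjecture} is automatic for $r\geq 2$ when $\gamma=2$ is a detail the paper leaves implicit, and it correctly explains why Remark \ref{3}(b) is still needed for $r=1$.
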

	 \begin{proof}
	If $d\leq 2r$, then $L$ satisfies M-S (see \cite{mistretastopino}, Theorem 1.1). The case $d>2r$ follows from  Lemma \ref{seccionesconjecture} and Remark \ref{3} (b).  
	
	 \end{proof}

\noindent{\bf Acknowledgments:} Second named author warmly thanks the Centro de Ciencias en Matem\'aticas (UNAM) in Morelia City for their hospitality and the use of their resources during a posdoctoral year. Both authors were supported by Research Grant PAPIIT IN100716 (UNAM). The second named author is supported by a Posdoctoral Fellowship from CONACyT, M\'exico.


%

\begin{thebibliography}{999}
\bibitem{bra}

{ L. Brambila-Paz}: Non-emptiness of moduli space of coherent systems.
 International Journal of Math. {\bf 19}, no. 7, 777--799 (2008).




\bibitem{butler}
{ Butler, D. C.}: Normal generation of vector bundles over a curve.
 J. Differential Geom. {\bf 39}, no. 1, 1--34 (1994).

 \vspace{.1cm}

\bibitem{but}
{ Butler, D. C.}:
 Birational maps of
moduli of Brill-Noether pairs. Preprint, arXiv:alg-geom/9705009.

 \vspace{.1cm}

\bibitem{bbn1}
	U.N. Bhosle, L. Brambila-Paz and P.E. Newstead:
	{\it On coherent systems of type $(n, d, n + 1)$ on Petri curves},
	Manuscripta Math. {\bf 126},  (2008), 409--441.

 \vspace{.1cm}
	
\bibitem{ein}
{ Ein, L.,  Lazarsfeld, R.}:{\it Stability and restrictions of Picard bundles, with an application to the normal bundles of elliptic curves}. In: Ellingsrud, G., Peskine, C., Sacchiero, G.,
Stromme, S.A. (eds.) Complex Projective Geometry (Trieste 1989/Bergen 1989). LMS
  Lecture Note Series, vol. 179, pp. 149--156. CUP, Cambridge (1992).

   \vspace{.1cm}


\bibitem{resolucionminimal} G. Farkas, M. Mustata and M. Popa: {\it Divisors on $\mathcal{M}_{g,g+1}$ and the minimal resolution conjecture for points on canonical curves},  Annales Sci. de \'Ecole Norm. Sup. (4)  {\bf 36}, (2003), 553--581.

 \vspace{.1cm}


\bibitem{mis}
{Mistretta,  E. C.}: {\it Stability of line bundle transforms on curves with respect to low codimensional subspaces}. J. Lond. Math. Soc. (2) {\bf 78}, no. 1, 172--182 (2008).

 \vspace{.1cm}

%
\bibitem{mistretastopino}
{ Mistretta, E. C.; Stoppino, L.}: {\it Linear series on curves: stability and Clifford index}.
  Internat. J. Math. {\bf 23}, no. 12 (2012).

 \vspace{.1cm}
  
\bibitem{mumfordstability}
{ Mumford, D.}: {\it Stability of projective varieties}. Lectures given at the "Institut des Hautes \'Etudes Scientifiques", Bures-sur-Yvette, March-April 1976. Monographie de l'Enseignement Math\'ematique, No. 24. L'Enseignement Math\'ematique, Geneva, (1977). 74 pp.
  
   \vspace{.1cm}

\bibitem{ram}
{ Paranjape, K., Ramanan, S.}: {\it On the canonical ring of a curve. In: Algebraic geometry and Commutative Algebra}, in Honor of Masayoshi Nagata, vol. 2, pp. 503--516 Kinokuniya (1987).

 \vspace{.1cm}

\bibitem{schneider} Schneider, O.: {\it Stabilit\'e des fibr\'es  $\wedge^p E_L$ et condition de Raynaud }. Fac. Sci. Toulouse Math. (6) 14(3), 515-525, (2005). 

 \vspace{.1cm}

\bibitem{montserrat} Gwena, Tawanda; Teixidor i Bigas: {\it Montserrat Maps between moduli spaces of vector bundles and the base locus of the theta divisor}. Proc. Amer. Math. Soc. 137 (2009), no. 3, 853-861.

 \vspace{.1cm}

	

\end{thebibliography}
\end{document}